
\documentclass[12pt]{article}
\usepackage{amsfonts}
\usepackage{amsmath}
\usepackage{graphicx}

\setcounter{MaxMatrixCols}{10}

\newtheorem{theorem}{Theorem}

\newtheorem{definition}[theorem]{Definition}

\newtheorem{lemma}[theorem]{Lemma}

\newtheorem{remark}[theorem]{Remark}

\newenvironment{proof}[1][Proof]{\noindent\textbf{#1.} }{\ \rule{0.5em}{0.5em}}
\oddsidemargin=0.5cm \textwidth=15.5cm \textheight=21.3cm
\topmargin=-0.7cm
\numberwithin{equation}{section}

\newcommand\mycom[2]{\genfrac{}{}{0pt}{}{#1}{#2}}
\input{tcilatex}

\begin{document}

\title{Fractional diffusion-type equations with exponential and logarithmic
differential operators }
\author{Luisa Beghin\thanks{%
Address: Department of Statistical Sciences, Sapienza University of Rome,
P.le A. Moro 5, I-00185 Roma, Italy. e-mail: \texttt{luisa.beghin@uniroma1.it%
}}}
\date{}
\maketitle

\begin{abstract}
\noindent We deal with some extensions of the space-fractional diffusion
equation, which is satisfied by the density of a stable process (see \cite%
{MAI}): the first equation considered here is obtained by adding an
exponential differential operator expressed in terms of the Riesz-Feller
derivative. We prove that this produces a random additional term in the
time-argument of the corresponding stable process, which is represented by
the so-called Poisson process with drift. Analogously, if we add, to the
space-fractional diffusion equation, a logarithmic differential operator
involving the Riesz-derivative, we obtain, as a solution, the transition
semigroup of a stable process subordinated by an independent gamma
subordinator with drift. Finally, we show that a non-linear extension of the
space-fractional diffusion equation is satisfied by the transition density
of the process obtained by time-changing the stable process with an
independent linear birth process with drift.

\textbf{Keywords}: Fractional exponential operator; Fractional logarithmic
operator; Riesz-Feller derivative, Gamma process with drift; Yule-Furry
process.

\noindent \emph{AMS Mathematical Subject Classification (2010).} 60G52,
34A08, 33E12, 26A33.
\end{abstract}

\section{Introduction}

The diffusion equation has been generalized in the fractional sense by many
authors (e.g. \cite{WYS}, \cite{SCH}, \cite{CHE}, \cite{ANG}): in particular
\cite{MEE} considers the time-fractional Cauchy problems, while in \cite{LUC}
the order of both time and space derivatives is fractional. Later, in \cite%
{MAI} and \cite{MAI2}, the time and space fractional diffusion equation was
studied and solved analytically, also in the asymmetric case. The
probabilistic expression of the solution to the diffusion equation with
time-derivative of fractional order $\nu $ is given in \cite{ORB}, in terms
of iterated stable processes (in particular, for $\nu =1/2^{n},$ $n\in
\mathbb{N}$, the $n$-times iterated Brownian motion).

The term 'anomalous diffusion' usually indicates a diffusive process that
does not follow the behavior of classical Gaussian diffusions. In the real
world, anomalous diffusions are observed, for example, in turbulent plasma
transport, photon diffusion, cell migration and so on. When a fractional
derivative replaces the second-order derivative in a diffusion model, we get
the so called superdiffusion. On the other hand, considering fractional time
derivatives produces anomalous subdiffusion, slower than a classical
diffusion. The stochastic time-space fractional heat-type equation has been
treated in \cite{MIJ}. For applications to physical and financial problems,
see also \cite{KOC}, \cite{MEZ}, \cite{SCA}.

We consider here extensions of the following space-fractional diffusion
equation, i.e.
\begin{equation*}
\partial _{t}u(x,t)=\mathcal{D}_{x}^{\alpha ,\theta }u(x,t),
\end{equation*}%
where $\mathcal{D}_{x}^{\alpha ,\theta }$ is the Riesz-Feller derivative of
order $\alpha \in (0,2]$, defined below. In particular we introduce in the
above equation additional terms represented by the so-called fractional
exponential (or shift) operator $\mathcal{O}_{c,x}^{\alpha ,\theta }$ or the
fractional logarithmic operator $\mathcal{P}_{c,x}^{\alpha }$ (see Def.1 and
Def.4 below)$.$ We are thus led to study the following equations, again for $%
\alpha \in (0,2],$%
\begin{equation}
\partial _{t}u(x,t)=\left[ a\mathcal{D}_{x}^{\alpha ,\theta }+\lambda (I-%
\mathcal{O}_{-1,x}^{\alpha ,\theta })\right] u(x,t)  \label{pp}
\end{equation}%
\begin{equation}
\partial _{t}u(x,t)=\left[ a\mathcal{D}_{x}^{\alpha }+\mu \mathcal{P}%
_{1/\rho ,x}^{\alpha }\right] u(x,t)  \label{aa}
\end{equation}%
under appropriate initial and boundary conditions. We prove that the
solution to equation (\ref{pp}) coincides with the transition semigroup of
the subordinated process defined as $\mathcal{S}_{\alpha ,\theta }(at+N(t)),$
$t\geq 0$, where $\mathcal{S}_{\alpha ,\theta }$ is an $\alpha $-stable
process and $N$ is an independent Poisson subordinator, with parameter $%
\lambda .$ In the second case, we prove instead that equation (\ref{aa}) is
satisfied by the transition semigroup of another subordinated $\alpha $%
-stable process defined as $\mathcal{S}_{\alpha }(at+\Gamma (t)),$ where $%
\Gamma (t),$ $t\geq 0$, is an independent gamma subordinator, with scale
parameter $\mu >0$. However, in both cases, the processes obtained are, for
any $\alpha \in (0,2)$, pure jump models, while, only for $\alpha =2,$ they
have a jump-diffusion behavior. In particular, we notice that $\mathcal{S}%
_{\alpha }(at+\Gamma (t))$ reduces, for $\alpha =2$ and $a=0$, to the
well-known Variance Gamma (VG) process. Jump-diffusions and VG processes are
applied in finance, in particular for asset pricing (see e.g. \cite{CON}).
For a general $\alpha \in (0,2)$, the process $\mathcal{S}_{\alpha
}(at+\Gamma (t))$ can be considered as a generalization of both stable and
geometric stable processes (see, for example, \cite{KOZ2}), to which it
reduces in special cases.

In the last section we prove that a non-linear analogue of (\ref{pp}) is
satisfied by the transition density of a stable process time-changed by an
independent linear birth process with drift.

Therefore, in all these cases, the additional operator introduced in the
fractional diffusion equation entails the appearance of a random element in
the time argument of the corresponding process. This additional random
element is represented, in the case of the fractional exponential operator,
by the Poisson or birth processes, while, for the fractional logarithmic
operator, it is given by the gamma process.

We remark that both equations (\ref{pp}) and (\ref{aa}) are connected to L%
\'{e}vy processes, while, only in the nonlinear case, we obtain a process
whose finite distributions are not infinitely divisible, even though it
still enjoys the Markov property.

\quad

We now introduce the notation and the basic definitions that we will use
throughout the paper.

Let $X:=X(t),t\geq 0$ be a one-dimensional L\'{e}vy process in $\mathbb{R}$
and $f$ be in the Schwartz space $S\mathcal{(\mathbb{R})}$ of rapidly
decreasing functions. Then we denote by $\widetilde{f}(\xi )$ the Fourier
transform of $f$, i.e. $\widetilde{f}(\xi ):=\mathcal{F}\left\{ f(x);\xi
\right\} =\int_{-\infty }^{+\infty }e^{ix\xi }f(x)dx,$ and by $T_{t}$ the
Feller semigroup associated to $X$, i.e.%
\begin{equation*}
(T_{t}f)(x)=\mathbb{E}f(x-X(t)),
\end{equation*}%
for $f\in C_{0}(\mathbb{R})$, the real Banach space of continuous functions
satisfying $\lim_{x\rightarrow \pm \infty }f(x)=0.$ The symbol of $T_{t}$ is
given by $\widehat{T}_{t}=e^{-t\eta },$ i.e.%
\begin{equation*}
\mathcal{F}\left\{ T_{t}f;\xi \right\} =e^{-t\eta (\xi )}\widetilde{f}(\xi
),\quad \xi \in \mathbb{R},
\end{equation*}%
while the L\'{e}vy (or characteristic) exponent of $X$ will be denoted by%
\begin{equation*}
\eta _{X}(\xi )=\frac{1}{t}\ln \left( \mathbb{E}e^{i\xi X(t)}\right) ,
\end{equation*}%
(see, for example, \cite{APPL}). Let $\mathcal{A}$ be the
pseudo-differential operator such that \ $T_{t}=e^{t\mathcal{A}}$, then we
denote by $\widehat{\mathcal{A}}$ its symbol, i.e.%
\begin{equation*}
\mathcal{F}\left\{ \mathcal{A}f(x);\xi \right\} =\widehat{\mathcal{A}}(\xi )%
\widetilde{f}(\xi )=-\eta (\xi )\widetilde{f}(\xi ),\quad \xi \in \mathbb{R}.
\end{equation*}%
We will consider the $\alpha $-stable process $\mathcal{S}_{\alpha ,\theta
}(t),$ $t\geq 0,$ with transition density $p_{\alpha ,\theta }(x;t)$ and
characteristic function
\begin{equation}
\Phi _{\mathcal{S}_{\alpha ,\theta }(t)}(\xi ):=\mathbb{E}e^{i\xi \mathcal{S}%
_{\alpha ,\theta }(t)}=\exp \{-t|\xi |^{\alpha }\sigma ^{\alpha }\omega
_{\alpha ,\theta }(\xi )\},\qquad \xi \in \mathbb{R},\;\alpha \in
(0,2],\;\sigma >0,  \label{ch2}
\end{equation}%
where $\theta =\frac{2}{\pi }\arctan \left[ -\beta \tan \pi \alpha /2\right]
$ and%
\begin{equation*}
\omega _{\alpha ,\theta }(\xi ):=\left\{
\begin{array}{c}
1-i\beta sign(\xi )\tan (\pi \alpha /2),\quad \text{if }\alpha \neq 1 \\
1+2i\beta sign(\xi )\log |\xi |/\pi ,\quad \text{if }\alpha =1%
\end{array}%
\right. ,\;\beta \in \lbrack -1,1],
\end{equation*}%
(see \cite{SAMO}, for general references on stable r.v.'s). By assuming $%
\sigma =\left( \cos \pi \theta /2\right) ^{1/\alpha }$, we can write (\ref%
{ch2}) as $\Phi _{\mathcal{S}_{\alpha ,\theta }(t)}(\xi )=\exp \{-t|\xi
|^{\alpha }e^{isign(\xi )\pi \theta /2}\}.$

We will use the \emph{Riesz-Feller (RF) fractional derivative}, which is
defined by means of its Fourier transform. Thus we consider the space $L^{c}(%
\mathcal{I})$ of functions for which the Riemann improper integral on any
open interval $\mathcal{I}$ absolutely converges (see \cite{MAR}). For any $%
f\in L^{c}(\mathbb{R}),$ the RF fractional derivative is defined as

\begin{equation}
\mathcal{F}\left\{ \mathcal{D}_{x}^{\alpha ,\theta }f(x);\xi \right\} =-\psi
_{\alpha ,\theta }(\xi )\mathcal{F}\left\{ f(x);\xi \right\} ,\quad \alpha
\in (0,2],\quad |\theta |\leq \min \{\alpha ,2-\alpha \},  \label{uno}
\end{equation}%
with symbol
\begin{equation}
\widehat{\mathcal{D}_{x}^{\alpha ,\theta }}(\xi )=-\psi _{\alpha ,\theta
}(\xi ):=-|\xi |^{\alpha }e^{i\,sign(\xi )\theta \pi /2},  \label{due}
\end{equation}%
(see \cite{MAI} and \cite{KIL}, p.359, up to the sign). Thus the expression
in (\ref{due}) coincides with the L\'{e}vy exponent of the $\alpha $-stable
r.v. $\mathcal{S}_{\alpha ,\theta }:=\mathcal{S}_{\alpha ,\theta }(1).$ As a
consequence, the RF derivative coincides with the generator of the stable
process $\mathcal{S}_{\alpha ,\theta },$ since it is proved in \cite{MAI}
that its density $p_{\alpha ,\theta }(x;t)$ solves the following problem%
\begin{equation}
\partial _{t}u(x,t)=\mathcal{D}_{x}^{\alpha ,\theta }u(x,t),\qquad
u(x,0)=f(x),\quad u(\pm \infty ,t)=0,\qquad x\in \mathbb{R},\text{ }t\geq 0,
\label{mai}
\end{equation}%
with $f\in L^{c}(\mathbb{R}).$

For $\theta =0$, which corresponds to the symmetric case (i.e. for $\beta
=0) $, the RF derivative reduces to the \textit{Riesz derivative}, which, in
its regularized form (valid also for $\alpha =1)$, can be written as
\begin{equation}
\mathcal{D}_{x}^{\alpha ,0}u(x)=\left\{
\begin{array}{l}
\frac{\Gamma (1+\alpha )}{\pi }\sin \left( \frac{\alpha \pi }{2}\right)
\int_{0}^{+\infty }\frac{f(x+z)-2f(x)+f(x-z)}{z^{1+\alpha }}dz,\quad \alpha
\in (0,2) \\
\partial _{x}^{2},\quad \alpha =2%
\end{array}%
\right. ,  \label{rie}
\end{equation}%
(see \cite{GOR}, p.341, for details). We will denote the latter simply as $%
\mathcal{D}_{x}^{\alpha }.$

For $\alpha \in (0,1)$ and $\theta =-\alpha $, which corresponds to the case
of the stable subordinator (with asymmetry parameter $\beta =1$), the
fractional derivative $\mathcal{D}_{x}^{\alpha ,\theta }$ coincides with the
(left-sided) Riemann-Liouville derivative, i.e.%
\begin{equation}
\mathcal{D}_{x}^{\alpha ,-\alpha }u(x)=\left\{
\begin{array}{l}
\frac{(-1)}{\Gamma (1-\alpha )}\frac{d}{dx}\int_{x}^{+\infty }\frac{f(z)}{%
(x-z)^{\alpha }}dz,\quad \alpha \in (0,1) \\
-\partial _{x},,\quad \alpha =1%
\end{array}%
\right. .  \label{crr}
\end{equation}%
Indeed
\begin{equation}
\widehat{\mathcal{D}_{x}^{\alpha ,-\alpha }}(\xi )=-\psi _{\alpha ,-\alpha
}(\xi )=-|\xi |^{\alpha }e^{-isign(\xi )\alpha \pi /2}=-(-i\xi )^{\alpha },
\label{sy}
\end{equation}%
see \cite{GOR}, p.333. In this case it is well-known that equation (\ref{mai}%
) is satisfied by the density of the $\alpha $-stable subordinator.

\section{Preliminary results}

We now introduce the following pseudo-differential operators, defined in
terms of the RF and the Riesz fractional derivatives (\ref{uno}) and (\ref%
{rie}), respectively.

\begin{definition}
\textbf{(Fractional shift or exponential operator)} Let $f\in L^{c}(\mathbb{R%
})$ be a function s.t. $g_{j}f:=\underbrace{\mathcal{D}_{x}^{\alpha ,\theta
}...\mathcal{D}_{x}^{\alpha ,\theta }}_{j-times}f\in L^{c}(\mathbb{R})$, for
any $j=0,1,...$. , then%
\begin{equation}
\mathcal{O}_{c,x}^{\alpha ,\theta }f(x):=\sum_{n=0}^{\infty }\frac{c^{n}}{n!}%
\underbrace{\mathcal{D}_{x}^{\alpha ,\theta }...\mathcal{D}_{x}^{\alpha
,\theta }}_{n-times}f(x),\qquad c\in \mathbb{R},,\quad \alpha \in
(0,2],\quad |\theta |\leq \min \{\alpha ,2-\alpha \}.  \label{fr1}
\end{equation}%
provided that the series converges uniformly.

\begin{lemma}
The symbol of (\ref{fr1}) is given, for $c\in \mathbb{R}$, by%
\begin{equation}
\widehat{\mathcal{O}_{c,x}^{\alpha ,\theta }}(\xi )=e^{-c\psi _{\alpha
,\theta }(\xi )},\quad \alpha \in (0,2],\quad |\theta |\leq \min \{\alpha
,2-\alpha \}.  \label{cf}
\end{equation}

\begin{proof}
By exploiting the uniform convergence, the Fourier transform of (\ref{fr1})
can be evaluated as follows
\begin{eqnarray*}
\int_{-\infty }^{+\infty }e^{ix\xi }\mathcal{O}_{c,x}^{\alpha ,\theta
}f(x)dx &=&\sum_{n=0}^{\infty }\frac{c^{n}}{n!}\int_{-\infty }^{+\infty
}e^{ix\xi }g_{n}(x)dx \\
&=&[\text{by (\ref{uno})}]=-\psi _{\alpha ,\theta }(\xi )\sum_{n=0}^{\infty }%
\frac{c^{n}}{n!}\int_{-\infty }^{+\infty }e^{ix\xi }g_{n-1}(x)dx \\
&=&\sum_{n=0}^{\infty }\frac{(-c\psi _{\alpha ,\theta }(\xi ))^{n}}{n!}%
\widetilde{f}(\xi ).
\end{eqnarray*}
\end{proof}

\begin{remark}
In the integer order case $\alpha =1$ and for $\theta =-1,$ we obtain, by (%
\ref{fr1}) and (\ref{crr}), the shift operator%
\begin{equation*}
\mathcal{O}_{c,x}^{1,-1}f(x)=e^{-c\partial _{x}}f(x)=f(x-c),
\end{equation*}%
while, for $\alpha =2$ and $\theta =0$, we get%
\begin{equation*}
\mathcal{O}_{c,x}^{2,0}f(x)=e^{c\partial _{x}^{2}}f(x),
\end{equation*}%
which is a special case of the generalized exponential operator considered
in \cite{DATT}. Note that the symbol of $e^{c\partial _{x}^{2}}$ is $%
e^{-c\xi ^{2}}$.
\end{remark}
\end{lemma}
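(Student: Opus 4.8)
The plan is to compute the Fourier transform of the series defining $\mathcal{O}_{c,x}^{\alpha,\theta}f$ term by term, using the hypothesis that the series converges uniformly (so that integration against $e^{ix\xi}$ may be interchanged with the summation), and then to apply the defining property \eqref{uno} of the Riesz--Feller derivative $n$ times to each summand. Concretely, I would first record that, by the iterated application of \eqref{uno}, for each $n\ge 0$ one has $\mathcal{F}\{g_n(x);\xi\}=\mathcal{F}\{\mathcal{D}_x^{\alpha,\theta}g_{n-1}(x);\xi\}=-\psi_{\alpha,\theta}(\xi)\,\mathcal{F}\{g_{n-1}(x);\xi\}=\bigl(-\psi_{\alpha,\theta}(\xi)\bigr)^n\widetilde f(\xi)$; this is legitimate precisely because the hypothesis $g_jf\in L^c(\mathbb R)$ for every $j$ guarantees that each Fourier transform in the chain is well defined.

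Next I would assemble the pieces: interchanging sum and integral gives
\begin{equation*}
\mathcal{F}\bigl\{\mathcal{O}_{c,x}^{\alpha,\theta}f(x);\xi\bigr\}
=\sum_{n=0}^{\infty}\frac{c^n}{n!}\,\mathcal{F}\{g_n(x);\xi\}
=\sum_{n=0}^{\infty}\frac{\bigl(-c\psi_{\alpha,\theta}(\xi)\bigr)^n}{n!}\,\widetilde f(\xi)
=e^{-c\psi_{\alpha,\theta}(\xi)}\,\widetilde f(\xi),
\end{equation*}
and by the definition of the symbol this yields $\widehat{\mathcal{O}_{c,x}^{\alpha,\theta}}(\xi)=e^{-c\psi_{\alpha,\theta}(\xi)}$, which is \eqref{cf}. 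The last equality uses only the scalar exponential series, which converges for every fixed $\xi$ since $\psi_{\alpha,\theta}(\xi)$ is a finite complex number.

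The one point that deserves care — and which I expect to be the main (indeed essentially the only) obstacle — is the justification of the term-by-term Fourier transform, i.e. the interchange of $\sum_n$ with $\int_{-\infty}^{+\infty}e^{ix\xi}\,(\cdot)\,dx$. The clean way is to invoke the \emph{uniform} convergence assumed in Definition, together with the integrability built into the hypotheses $g_jf\in L^c(\mathbb R)$, so that a dominated-convergence (or Weierstrass $M$-test) argument applies on each bounded interval and then globally; alternatively one may note that uniform convergence of the partial sums, together with membership in $L^c(\mathbb R)$, suffices to pass the limit through the improper Riemann integral. Everything else is the routine iteration of \eqref{uno} and recognition of the exponential series, so the proof is short once this interchange is granted.
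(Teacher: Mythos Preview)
Your proof is correct and follows essentially the same route as the paper: interchange sum and integral by uniform convergence, then iterate the Fourier relation \eqref{uno} to get $(-\psi_{\alpha,\theta}(\xi))^n\widetilde f(\xi)$ for each term, and sum the exponential series. Your discussion of the interchange is in fact more careful than the paper's, which simply invokes uniform convergence without further comment.
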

\end{definition}

The fractional shift operator has been introduced in \cite{BEGG}, in the
special case $\alpha \in (0,1)$ and $\theta =-\alpha .$

\

We define another pseudo-differential operator in the symmetric case, i.e.
for $\theta =0$. Under this assumption the symbol (\ref{due}) is real.

\begin{definition}
\textbf{(Fractional logarithmic operator) }Let $f\in L^{c}(\mathbb{R})$ be a
function s.t. $g_{j}f=\underbrace{\mathcal{D}_{x}^{\alpha }...\mathcal{D}%
_{x}^{\alpha }}_{j-times}f\in L^{c}(\mathbb{R})$, for any $j=0,...$. , then%
\begin{equation}
\mathcal{P}_{c,x}^{\alpha }f(x):=\sum_{n=1}^{\infty }\frac{c^{n}}{n}%
\underbrace{\mathcal{D}_{x}^{\alpha }...\mathcal{D}_{x}^{\alpha }}%
_{n-times}f(x),\qquad c>0,\quad \alpha \in (0,2].  \label{fr2}
\end{equation}%
provided that the series converges uniformly.

\begin{lemma}
The symbol of (\ref{fr2}) is given by%
\begin{equation}
\widehat{\mathcal{P}_{c,x}^{\alpha }}(\xi )=-\ln (1+c|\xi |^{\alpha }),\quad
c>0,\text{ }|\xi |<1/c^{1/\alpha }.  \label{sy2}
\end{equation}

\begin{proof}
As before, we get
\begin{eqnarray*}
\int_{-\infty }^{+\infty }e^{ix\xi }\mathcal{P}_{c,x}^{\alpha }f(x)dx
&=&\sum_{n=1}^{\infty }\frac{c^{n}}{n}\int_{-\infty }^{+\infty }e^{ix\xi }%
\underbrace{\mathcal{D}_{x}^{\alpha }...\mathcal{D}_{x}^{\alpha }}%
_{n-times}f(x)dx \\
&=&\sum_{n=1}^{\infty }\frac{(-c|\xi |^{\alpha })^{n}}{n}\widetilde{f}(\xi ),
\end{eqnarray*}%
which coincides with (\ref{sy2}).
\end{proof}
\end{lemma}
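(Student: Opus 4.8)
The plan is to follow exactly the pattern used for the symbol of the fractional shift operator, applying the Fourier transform to the series (\ref{fr2}) term by term. Writing $g_{n}:=\underbrace{\mathcal{D}_{x}^{\alpha}\cdots\mathcal{D}_{x}^{\alpha}}_{n\text{-times}}f$, I would first use the uniform convergence assumed in Definition (\ref{fr2}) to interchange the summation with the integral, obtaining
\[
\int_{-\infty}^{+\infty}e^{ix\xi}\,\mathcal{P}_{c,x}^{\alpha}f(x)\,dx=\sum_{n=1}^{\infty}\frac{c^{n}}{n}\int_{-\infty}^{+\infty}e^{ix\xi}g_{n}(x)\,dx .
\]

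The second step is to compute $\mathcal{F}\{g_{n};\xi\}$. Since each $g_{j}\in L^{c}(\mathbb{R})$, the defining relation (\ref{uno}) in the symmetric case $\theta=0$ --- where $\psi_{\alpha,0}(\xi)=|\xi|^{\alpha}$ by (\ref{due}) --- may be applied repeatedly, giving $\mathcal{F}\{g_{n};\xi\}=(-|\xi|^{\alpha})^{n}\,\widetilde{f}(\xi)$ for every $n\geq 0$. Substituting this into the previous display yields
\[
\mathcal{F}\{\mathcal{P}_{c,x}^{\alpha}f;\xi\}=\widetilde{f}(\xi)\sum_{n=1}^{\infty}\frac{(-c|\xi|^{\alpha})^{n}}{n}.
\]

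Finally I would identify the scalar series with the classical logarithmic expansion $\sum_{n\geq 1}z^{n}/n=-\ln(1-z)$, valid precisely for $|z|<1$. Taking $z=-c|\xi|^{\alpha}$ forces the restriction $c|\xi|^{\alpha}<1$, i.e. $|\xi|<c^{-1/\alpha}$, and produces the value $-\ln(1+c|\xi|^{\alpha})$; dividing both sides by $\widetilde{f}(\xi)$ gives the symbol (\ref{sy2}).

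The argument involves no hard estimates; the only point that needs care is the domain of validity. The term-by-term Fourier transform rests on the uniform convergence hypothesis built into Definition (\ref{fr2}), and the logarithmic series converges only on the frequency band $|\xi|<c^{-1/\alpha}$, so the closed-form expression for the symbol --- unlike the entire function $e^{-c\psi_{\alpha,\theta}(\xi)}$ appearing for the shift operator --- is only meaningful there. I expect this bookkeeping about the admissible range of $\xi$ (and, implicitly, the corresponding class of $f$ for which the operator is well defined) to be the main thing to get right.
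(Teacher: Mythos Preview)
Your proposal is correct and follows essentially the same approach as the paper: interchange the Fourier transform with the defining series via uniform convergence, use the symbol relation $\widehat{\mathcal{D}_{x}^{\alpha}}(\xi)=-|\xi|^{\alpha}$ iteratively to obtain $\sum_{n\geq 1}(-c|\xi|^{\alpha})^{n}/n$, and identify this with $-\ln(1+c|\xi|^{\alpha})$. Your write-up is in fact slightly more explicit than the paper's about the role of the convergence radius $|\xi|<c^{-1/\alpha}$, but the argument is the same.
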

\end{definition}

In the integer order case $\alpha =2$, we can apply the semigroup property
of the standard derivatives, so that we have $\mathcal{P}_{c,x}^{2}f(x)=-\ln
\left( 1+c\frac{d^{2}}{dx^{2}}\right) f(x).$ The fractional logarithmic
operator has been used in \cite{BEG}, in connection with the geometric
stable processes.

\begin{remark}
We note that a sufficient condition for the uniform convergence of the
series in (\ref{fr1}) and (\ref{fr2}) is that $f(x)$ is an eigenfunction of
the RF fractional derivative. For example, when $\theta =-\alpha ,$ this is
the case for $f(x)=e^{-kx},$ for $x\in \mathbb{R},$ $k>0$. Indeed, in view
of (2.2.15) in \cite{KIL}, p.81, we have that%
\begin{equation*}
\mathcal{O}_{c,x}^{\alpha ,-\alpha }e^{-kx}=\sum_{n=0}^{\infty }\frac{%
(ck^{\alpha })^{n}}{n!}e^{-kx}=e^{-kx+ck^{\alpha }}.
\end{equation*}%
Moreover the uniform convergence of the series in (\ref{fr1}) and (\ref{fr2}%
) holds for the density of the $\alpha $-stable process, as the following
lemma shows.
\end{remark}

\begin{lemma}
The transition density $p_{\alpha ,\theta }(x;t)$ of the $\alpha $-stable
process $\mathcal{S}_{\alpha ,\theta }(t),t\geq 0$ satisfies the following
equations%
\begin{equation}
\mathcal{O}_{c,x}^{\alpha ,\theta }u(x,t)=e^{c\partial _{t}}u(x,t)=u(x,t+c).
\label{cc}
\end{equation}%
In the special case $\theta =0$, we also have that%
\begin{equation}
\mathcal{P}_{c,x}^{\alpha }u(x,t)=-\ln \left( 1-c\partial _{t}\right) u(x,t).
\label{cc2}
\end{equation}
\end{lemma}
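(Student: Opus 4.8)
The plan is to work entirely on the Fourier side, using the symbols computed in Lemmas for the operators $\mathcal{O}_{c,x}^{\alpha,\theta}$ and $\mathcal{P}_{c,x}^{\alpha}$ together with the known characteristic function of the stable process. First I would recall from \eqref{ch2} (in the normalized form) that $\widetilde{p}_{\alpha,\theta}(\xi;t) = \mathbb{E}e^{i\xi \mathcal{S}_{\alpha,\theta}(t)} = e^{-t\psi_{\alpha,\theta}(\xi)}$, so that $u(x,t)=p_{\alpha,\theta}(x;t)$ has Fourier transform $e^{-t\psi_{\alpha,\theta}(\xi)}$; here I use that the symbol of $\mathcal{D}_x^{\alpha,\theta}$ is $-\psi_{\alpha,\theta}(\xi) = -|\xi|^\alpha e^{i\,\mathrm{sign}(\xi)\theta\pi/2}$ from \eqref{due}. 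The uniform convergence needed to justify termwise Fourier inversion is exactly what the preceding Remark asserts for the stable density, so I may invoke it.

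For the first identity \eqref{cc}, I would apply the Fourier transform to $\mathcal{O}_{c,x}^{\alpha,\theta}u(x,t)$ and use Lemma's formula \eqref{cf}: $\mathcal{F}\{\mathcal{O}_{c,x}^{\alpha,\theta}u(\cdot,t);\xi\} = e^{-c\psi_{\alpha,\theta}(\xi)}\,\widetilde{u}(\xi,t) = e^{-c\psi_{\alpha,\theta}(\xi)}e^{-t\psi_{\alpha,\theta}(\xi)} = e^{-(t+c)\psi_{\alpha,\theta}(\xi)} = \widetilde{u}(\xi,t+c)$. Inverting gives $\mathcal{O}_{c,x}^{\alpha,\theta}u(x,t)=u(x,t+c)$. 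The middle expression $e^{c\partial_t}u(x,t)$ is then identified by the same computation read in reverse: since $\partial_t \widetilde{u}(\xi,t) = -\psi_{\alpha,\theta}(\xi)\widetilde{u}(\xi,t)$, formally $e^{c\partial_t}$ acts on the Fourier side as multiplication by $e^{-c\psi_{\alpha,\theta}(\xi)}$, which is the Taylor series $\sum_n \frac{c^n}{n!}(-\psi_{\alpha,\theta}(\xi))^n$; equivalently, $e^{c\partial_t}u(x,t)=\sum_n\frac{c^n}{n!}\partial_t^n u(x,t) = \sum_n\frac{c^n}{n!}(\mathcal{D}_x^{\alpha,\theta})^n u(x,t)$ because $u$ solves \eqref{mai}, and this last sum is precisely $\mathcal{O}_{c,x}^{\alpha,\theta}u(x,t)$ by definition \eqref{fr1}. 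So the chain of equalities in \eqref{cc} follows from \eqref{mai}, the definition of $\mathcal{O}$, and the Taylor expansion of the analytic shift.

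For \eqref{cc2}, in the symmetric case $\theta=0$ the symbol of $\mathcal{D}_x^\alpha$ is $-|\xi|^\alpha$, so $\widetilde{u}(\xi,t)=e^{-t|\xi|^\alpha}$ and $\partial_t\widetilde{u}(\xi,t)=-|\xi|^\alpha\widetilde{u}(\xi,t)$. Hence $(1-c\partial_t)$ acts on the Fourier side as multiplication by $1+c|\xi|^\alpha$, and $-\ln(1-c\partial_t)$ as multiplication by $-\ln(1+c|\xi|^\alpha)$, which by Lemma's \eqref{sy2} is exactly the symbol $\widehat{\mathcal{P}_{c,x}^\alpha}(\xi)$. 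Equivalently, using \eqref{mai} with $\theta=0$ to replace $\partial_t$ by $\mathcal{D}_x^\alpha$, $-\ln(1-c\partial_t)u(x,t)=\sum_{n\geq1}\frac{c^n}{n}\partial_t^n u(x,t) = \sum_{n\geq1}\frac{c^n}{n}(\mathcal{D}_x^\alpha)^n u(x,t) = \mathcal{P}_{c,x}^\alpha u(x,t)$ by definition \eqref{fr2}. The convergence restriction $|\xi|<c^{-1/\alpha}$ from \eqref{sy2} is harmless here because the Gaussian-type decay of $e^{-t|\xi|^\alpha}$ dominates, and the logarithmic series in $t$-derivatives converges for the stable density by the Remark.

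The main obstacle is the analytic bookkeeping around the operator $e^{c\partial_t}$ and $\ln(1-c\partial_t)$: one must be careful that interchanging the infinite sum with the Fourier transform is legitimate, and that the identity $\partial_t^n u = (\mathcal{D}_x^{\alpha,\theta})^n u$ propagates to all orders $n$ (which requires that each iterate stays in $L^c(\mathbb{R})$ and that \eqref{mai} can be differentiated in $t$). Both points are covered by the uniform-convergence hypothesis built into Definitions and by the Remark immediately preceding this lemma, so the proof reduces to writing out the three displayed Fourier computations and inverting.
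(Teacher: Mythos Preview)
Your proposal is correct and follows essentially the same approach as the paper: both arguments compute the Fourier transform of $\mathcal{O}_{c,x}^{\alpha,\theta}u$ and $\mathcal{P}_{c,x}^{\alpha}u$ via the symbols \eqref{cf} and \eqref{sy2}, recognize the result as $\sum_n \frac{c^n}{n!}\partial_t^n e^{-t\psi_{\alpha,\theta}(\xi)}$ (resp.\ $\sum_{n\geq1}\frac{c^n}{n}\partial_t^n e^{-t|\xi|^\alpha}$), and identify this with the time-shift (resp.\ logarithmic) operator. You are slightly more explicit about the final equality $u(x,t+c)$ and about the analytic justifications, but the substance is the same.
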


\begin{proof}
Recall that $p_{\alpha ,\theta }(x;t)$ satisfies equation (\ref{mai}); thus
we get that%
\begin{eqnarray}
\mathcal{O}_{c,x}^{\alpha ,\theta }u(x,t) &=&\sum_{n=0}^{\infty }\frac{c^{n}%
}{n!}\partial _{t}^{n}u(x,t)  \label{ss} \\
&=&e^{c\partial _{t}}u(x,t),  \notag
\end{eqnarray}%
which coincides with (\ref{cc}). The first step can be checked by resorting
to the Fourier transform and considering (\ref{cf}):%
\begin{eqnarray*}
\mathcal{F}\left\{ \mathcal{O}_{c,x}^{\alpha ,\theta }u(x,t);\xi \right\}
&=&e^{-c\psi _{\alpha ,\theta }(\xi )}\widetilde{u}(\xi
,t)=\sum_{n=0}^{\infty }\frac{c^{n}}{n!}\partial _{t}^{n}e^{-\psi _{\alpha
,\theta }(\xi )t} \\
&=&[\text{by (\ref{due})}]=\mathcal{F}\left\{ \sum_{n=0}^{\infty }\frac{c^{n}%
}{n!}\partial _{t}^{n}u(x,t);\xi \right\} .
\end{eqnarray*}%
Analogously we obtain (\ref{cc2}) as follows:%
\begin{equation*}
\mathcal{P}_{c,x}^{\alpha }u(x,t)=\sum_{n=1}^{\infty }\frac{c^{n}}{n}%
\partial _{t}^{n}u(x,t),
\end{equation*}%
since, by (\ref{sy2}),%
\begin{eqnarray*}
\mathcal{F}\left\{ \mathcal{P}_{c,x}^{\alpha }u(x,t);\xi \right\} &=&-\ln
(1+c|\xi |^{\alpha })\widetilde{u}(\xi ,t) \\
&=&\sum_{n=1}^{\infty }\frac{(-c|\xi |^{\alpha })^{n}}{n}=\sum_{n=1}^{\infty
}\frac{c^{n}}{n}\partial _{t}^{n}e^{-|\xi |^{\alpha }t} \\
&=&\mathcal{F}\left\{ \sum_{n=1}^{\infty }\frac{c^{n}}{n}\partial
_{t}^{n}u(x,t);\xi \right\} .
\end{eqnarray*}
\end{proof}

\

We consider now an extension of the (space-)fractional diffusion equation (%
\ref{mai}), obtained by adding the fractional exponential operator. We prove
that, as a consequence, the stochastic process governed by the new equation
is again the stable process, but with a random time-argument represented by
the Poisson process with drift, i.e. $N(t)+at,$ $a,t\geq 0$. The latter has
been studied in \cite{BEG2}, where also the case of a general L\'{e}vy
process subordinated by it has been analyzed.

Let $N(t)$ be a homogeneous Poisson process with parameter $\lambda $,
independent from the stable process $\mathcal{S}_{\alpha ,\theta }$; we
define here the following subordinated process%
\begin{equation}
\mathcal{Z}_{\alpha }(t):=\mathcal{S}_{\alpha ,\theta }(at+N(t)),\qquad
t\geq 0,\text{ }a\geq 0  \label{zz}
\end{equation}%
by adding a random term in the time argument, which, in this case is
represented by the Poisson process. As special case, for $\lambda
\rightarrow 0,$ $a=1$, we obtain the standard stable process.

\begin{lemma}
\textbf{(Fractional diffusion-type equation with exponential differential
operator)} Let $f\in L^{c}(\mathbb{R})$ satisfy the condition given in
Def.1. Then the following initial-value problem
\begin{equation}
\left\{
\begin{array}{l}
\partial _{t}u(x,t)=\left[ a\mathcal{D}_{x}^{\alpha ,\theta }+\lambda (I-%
\mathcal{O}_{1,x}^{\alpha ,\theta })\right] u(x,t) \\
u(x,0)=f(x)%
\end{array}%
\right. ,  \label{cor}
\end{equation}%
is satisfied by the transition semigroup of the process $\mathcal{Z}_{\alpha
}$, given by%
\begin{equation}
\mathcal{T}_{t}^{\mathcal{Z}}f(x)=e^{-\lambda t}\sum_{k=0}^{\infty }\frac{%
(\lambda t)^{k}}{k!}\int_{\mathbb{R}}f(x-y)p_{\alpha ,\theta }(y,k+at)dy.
\label{cor2}
\end{equation}
\end{lemma}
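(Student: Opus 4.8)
The plan is to verify directly that the candidate solution $\mathcal{T}_t^{\mathcal{Z}}f$ in \eqref{cor2} satisfies both the initial condition and the PDE, working on the level of Fourier transforms, where all the pseudo-differential operators become multiplication operators with known symbols. First I would record the symbol of the right-hand side operator: by the definition of $\mathcal{D}_x^{\alpha,\theta}$ in \eqref{uno}--\eqref{due} and the symbol of $\mathcal{O}_{1,x}^{\alpha,\theta}$ computed in the first Lemma (namely $e^{-\psi_{\alpha,\theta}(\xi)}$), the operator $a\mathcal{D}_x^{\alpha,\theta}+\lambda(I-\mathcal{O}_{1,x}^{\alpha,\theta})$ acts in Fourier space as multiplication by
\[
\Lambda(\xi):=-a\psi_{\alpha,\theta}(\xi)+\lambda\bigl(1-e^{-\psi_{\alpha,\theta}(\xi)}\bigr).
\]
Note this is, up to sign, exactly the Lévy exponent of the subordinated process $\mathcal{S}_{\alpha,\theta}(at+N(t))$: the drift term $at$ contributes $-a\psi_{\alpha,\theta}(\xi)$, and the Poisson subordinator $N$ with rate $\lambda$ contributes, via the subordination formula, $\lambda(e^{-\psi_{\alpha,\theta}(\xi)}-1)$ to the exponent of $\mathcal{S}_{\alpha,\theta}$ composed with it. So the PDE one expects to hold in Fourier space is simply $\partial_t\widetilde{u}(\xi,t)=\Lambda(\xi)\widetilde{u}(\xi,t)$.

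Next I would compute the Fourier transform of \eqref{cor2}. Using the convolution structure, $\mathcal{F}\{\int_{\mathbb{R}}f(x-y)p_{\alpha,\theta}(y,k+at)dy;\xi\}=\widetilde{f}(\xi)\,\widetilde{p}_{\alpha,\theta}(\xi,k+at)$, and from \eqref{ch2} (in the normalized form $\Phi_{\mathcal{S}_{\alpha,\theta}(t)}(\xi)=\exp\{-t\psi_{\alpha,\theta}(\xi)\}$) this equals $\widetilde{f}(\xi)e^{-(k+at)\psi_{\alpha,\theta}(\xi)}$. Summing the series gives
\[
\widetilde{\mathcal{T}_t^{\mathcal{Z}}f}(\xi)=\widetilde{f}(\xi)\,e^{-\lambda t}e^{-at\psi_{\alpha,\theta}(\xi)}\sum_{k=0}^{\infty}\frac{(\lambda t)^k}{k!}e^{-k\psi_{\alpha,\theta}(\xi)}=\widetilde{f}(\xi)\exp\bigl\{-\lambda t+t\bigl[-a\psi_{\alpha,\theta}(\xi)+\lambda e^{-\psi_{\alpha,\theta}(\xi)}\bigr]\bigr\}=\widetilde{f}(\xi)\,e^{t\Lambda(\xi)}.
\]
Differentiating in $t$ then gives $\partial_t\widetilde{\mathcal{T}_t^{\mathcal{Z}}f}(\xi)=\Lambda(\xi)\widetilde{\mathcal{T}_t^{\mathcal{Z}}f}(\xi)$, which is precisely the Fourier image of \eqref{cor}; inverting the Fourier transform yields the PDE. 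The initial condition $u(x,0)=f(x)$ is immediate from \eqref{cor2}, since at $t=0$ only the $k=0$ term survives and $p_{\alpha,\theta}(y,0)$ is the Dirac mass, so $\mathcal{T}_0^{\mathcal{Z}}f=f$; alternatively, $\widetilde{\mathcal{T}_0^{\mathcal{Z}}f}(\xi)=\widetilde{f}(\xi)$.

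The main obstacle is not the formal computation but the justification of the analytic steps: interchanging the infinite sum over $k$ with the Fourier integral, and interchanging $\partial_t$ with that sum. For the Fourier interchange I would invoke dominated convergence, using that $|e^{-\psi_{\alpha,\theta}(\xi)}|\le e^{-|\xi|^\alpha\cos(\theta\pi/2)}\le 1$ for $|\theta|\le\min\{\alpha,2-\alpha\}$, so that $\sum_k\frac{(\lambda t)^k}{k!}|\widetilde{p}_{\alpha,\theta}(\xi,k+at)|\le e^{\lambda t}|\widetilde{f}(\xi)|$, which is integrable since $f\in S(\mathbb{R})$-type decay is inherited; here the hypothesis $f\in L^c(\mathbb{R})$ with $\mathcal{D}_x^{\alpha,\theta}$-regularity from Def.~1 is what guarantees that $\mathcal{O}_{1,x}^{\alpha,\theta}f$ and hence the whole right-hand side is well-defined pointwise, so that the operator identity and the term-by-term application of \eqref{uno} are legitimate. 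For the $\partial_t$ interchange, the series $\sum_k\frac{(\lambda t)^k}{k!}\partial_t^{m}p_{\alpha,\theta}(y,k+at)$ converges locally uniformly in $t$ because $p_{\alpha,\theta}$ and its $t$-derivatives are bounded on $\{t\ge\varepsilon\}$. I would also note, as the cleanest probabilistic reading, that \eqref{cor2} is exactly $\mathbb{E}f(x-\mathcal{S}_{\alpha,\theta}(at+N(t)))$ obtained by conditioning on $N(t)$, which makes transparent both that it is a genuine transition semigroup and that its generator must be the operator appearing in \eqref{cor}.
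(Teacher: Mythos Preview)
Your approach is essentially identical to the paper's: take the Fourier transform of the candidate \eqref{cor2}, sum the Poisson series to obtain a closed exponential in $t$, differentiate, and match against the symbol of the right-hand side of \eqref{cor}. The paper's proof is terser---it omits your dominated-convergence justifications, the probabilistic reading via conditioning on $N(t)$, and the explicit check of the initial condition---but the substance is the same computation.

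One concrete slip to flag: your last displayed equality is algebraically wrong. With your $\Lambda(\xi)=-a\psi_{\alpha,\theta}(\xi)+\lambda\bigl(1-e^{-\psi_{\alpha,\theta}(\xi)}\bigr)$, the exponent you actually compute is
\[
-\lambda t+t\bigl[-a\psi_{\alpha,\theta}(\xi)+\lambda e^{-\psi_{\alpha,\theta}(\xi)}\bigr]
=t\bigl[-a\psi_{\alpha,\theta}(\xi)-\lambda\bigl(1-e^{-\psi_{\alpha,\theta}(\xi)}\bigr)\bigr],
\]
which is $-t\bigl[a\psi_{\alpha,\theta}+\lambda(1-e^{-\psi_{\alpha,\theta}})\bigr]$, not $t\Lambda(\xi)$. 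This is not a flaw in your method but a typo inherited from the statement \eqref{cor}: the paper's own Fourier-side equation \eqref{ast} reads $\partial_t\widetilde u=-\bigl[a\psi_{\alpha,\theta}+\lambda(1-e^{-\psi_{\alpha,\theta}})\bigr]\widetilde u$, which corresponds to the operator $a\mathcal{D}_x^{\alpha,\theta}-\lambda(I-\mathcal{O}_{1,x}^{\alpha,\theta})$ rather than the one printed in \eqref{cor} (and is consistent with the L\'evy exponent computed in the remark following the lemma). With that sign corrected, your argument goes through verbatim.
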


\begin{proof}
By Lemma 7 it is easy to check that for the function (\ref{cor2}) the
fractional exponential operator $\mathcal{O}_{c,x}^{\alpha ,\theta }$ is
well defined$.$ Taking the Fourier transform of (\ref{cor}), we obtain%
\begin{equation}
\partial _{t}\widetilde{u}(\xi ,t)=-\left[ a\psi _{\alpha ,\theta }(\xi
)+\lambda (1-e^{-\psi _{\alpha ,\theta }(\xi )})\right] \widetilde{u}(\xi
,t),  \label{ast}
\end{equation}%
by (\ref{uno}) and (\ref{cf}). On the other hand, from (\ref{cor2}), we have
that
\begin{eqnarray*}
\widetilde{u}(\xi ,t) &=&\widetilde{f}(\xi )e^{-\lambda t}\sum_{k=0}^{\infty
}\frac{(\lambda t)^{k}}{k!}\widetilde{p}_{\alpha ,\theta }(\xi ,k+at) \\
&=&\widetilde{f}(\xi )e^{-\lambda t}\sum_{k=0}^{\infty }\frac{(\lambda t)^{k}%
}{k!}e^{-\psi _{\alpha ,\theta }(\xi )(k+at)} \\
&=&\widetilde{f}(\xi )\exp \{-\left[ \lambda (1-e^{-\psi _{\alpha ,\theta
}(\xi )})+a\psi _{\alpha ,\theta }(\xi )\right] t\},
\end{eqnarray*}%
which, differentiated w.r.t. $t$, gives (\ref{ast}).
\end{proof}

\begin{remark}
It is easy to see that, for $a\geq 0$, the time argument in (\ref{zz}) is a
subordinator and thus this time change represents a subordination. Moreover $%
\mathcal{Z}_{\alpha }$ is\ a L\'{e}vy process, since it is given by the
composition of two independent L\'{e}vy processes. Its L\'{e}vy symbol can
be obtained as follows, by considering the Laplace transform $\mathbb{E}%
e^{-s(N(t)+at)}=\exp \{-sat-\lambda t(1-e^{-s})\}$:
\begin{eqnarray}
\eta _{\mathcal{Z}_{\alpha }}(\xi ) &=&\frac{1}{t}\ln \left\{ \mathbb{E}%
\left( \left. \mathbb{E}e^{i\xi \mathcal{S}_{\alpha ,\theta
}(at+N(t))}\right\vert N(t)\right) \right\} =\frac{1}{t}\ln \left\{ \mathbb{E%
}e^{-\psi _{\alpha ,\theta }(\xi )[at+N(t)]}\right\}  \notag \\
&=&\frac{1}{t}\ln \left\{ e^{-a\psi _{\alpha ,\theta }(\xi )t-\lambda
t(1-e^{-\psi _{\alpha ,\theta }(\xi )})}\right\} =-a\psi _{\alpha ,\theta
}(\xi )-\lambda (1-e^{-\psi _{\alpha ,\theta }(\xi )}).  \notag
\end{eqnarray}%
The L\'{e}vy measure can be evaluated, by applying Theorem 30.1, p.197\ in
\cite{SATO}, as follows%
\begin{eqnarray*}
\nu _{\mathcal{Z}_{\alpha }}(x) &=&a\nu _{\mathcal{S}_{\alpha ,\theta
}}(x)+\lambda \int_{0}^{+\infty }p_{\alpha ,\theta }(x,s)\delta (s-1)ds \\
&=&a\left[ \frac{P}{x^{1+\alpha }}1_{(0,+\infty )}(x)+\frac{Q}{|x|^{1+\alpha
}}1_{(-\infty ,0)}(x)\right] +\lambda p_{\alpha ,\theta }(x,1),
\end{eqnarray*}%
by considering that the drift coefficient of the random time argument is
equal to $a$. For $\alpha \in (0,2)$ the diffusion coefficient is $A_{%
\mathcal{Z}_{\alpha }}=0$ and thus the process is a pure jump process.
Moreover, for $\alpha \in (0,1),$ the process has finite variation, since $%
\int_{|x|\leq 1}|x|\nu _{\mathcal{Z}_{\alpha }}(dx)<\infty $. On the other
hand, since $\nu _{\mathcal{Z}_{\alpha }}(\mathbb{R})=\infty $ for any $%
\alpha $, the expected number of jumps in any finite interval is infinite,
i.e. the process displays infinite activity. The drift coefficient reads%
\begin{eqnarray*}
\gamma _{\mathcal{Z}_{\alpha }} &=&a\int_{0}^{+\infty }\delta
(s-1)ds\int_{|x|\leq 1}xp_{\alpha ,\theta }^{s}(x;1)dx \\
&=&a\int_{|x|\leq 1}xp_{\alpha ,\theta }(x;1)dx.
\end{eqnarray*}%
For $\alpha =2$ and $\theta =0,$ we obtain the L\'{e}vy triplet of the
process $\mathcal{Z}_{2}(t)=W(at+N(t)),t\geq 0$ (where $W(t),t>0$ is a
standard Brownian motion with characteristic function $e^{-\xi ^{2}t}$ and
thus variance equal to $2t$):%
\begin{equation}
\nu _{\mathcal{Z}_{2}}(x)=\lambda \int_{0}^{+\infty }\frac{e^{-x^{2}/2s}}{%
\sqrt{2\pi s}}\delta (s-1)ds=\lambda \frac{e^{-x^{2}/2}}{\sqrt{2\pi }},
\end{equation}%
by considering that the L\'{e}vy measure of the Brownian motion is zero, and%
\begin{equation}
A_{\mathcal{Z}_{2}}=a,\qquad \gamma _{\mathcal{Z}_{2}}=0.
\end{equation}%
The first two moments exist and can be obtained by deriving the
characteristic function, which reads $\mathbb{E}e^{i\xi \mathcal{Z}%
_{2}(t)}=\exp \{-\xi ^{2}at-\lambda (1-e^{-\xi ^{2}})t\}$:%
\begin{eqnarray*}
\mathbb{E}\mathcal{Z}_{2}(t) &=&0 \\
Var\mathcal{Z}_{2}(t) &=&2t\left( a+\lambda \right) .
\end{eqnarray*}
\end{remark}

\begin{remark}
As a consequence of Lemma 8, we can write the generator of $\mathcal{Z}%
_{\alpha }$ as
\begin{equation}
\mathcal{G}_{\alpha }f(x)=a\mathcal{D}_{x}^{\alpha ,\theta }f(x)-\lambda
\int_{\mathbb{R}}\left( f(x+y)-f(x)\right) p_{\alpha ,\theta }(y,1)dy
\label{ff}
\end{equation}%
with symbol $\widehat{\mathcal{G}_{\alpha }}(\xi )=-a\psi _{\alpha ,\theta
}(\xi )+\lambda (1-e^{-\psi _{\alpha ,\theta }(\xi )}).$ Equation (\ref{ff})
can be alternatively obtained from (5.2) in \cite{BEG2}, by considering that
the generator of the stable process is the RF derivative and taking the
fractional parameters in \cite{BEG2} equal to one. Moreover it suggests the
following alternative expression of the fractional shift operator as
integral transform of the standard shift operator, i.e.%
\begin{equation*}
\mathcal{O}_{c,x}^{\alpha ,\theta }f(x)=\int_{\mathbb{R}}e^{cy\partial
_{x}}p_{\alpha ,\theta }(y,1)f(x)dy,
\end{equation*}%
which, in the case $\alpha \in (0,1)$ and for $c=-1,$ coincides with the
definition given in \cite{DOV}.
\end{remark}

\begin{remark}
In the symmetric case and for $\alpha =2$, the previous result shows that
the density of the process $\mathcal{Z}_{2}(t)=W(at+N(t))$, $t\geq 0$
satisfies the following equation%
\begin{equation*}
\partial _{t}u(x,t)=\left[ a\partial _{x}^{2}+\lambda (I-e^{-\partial
_{x}^{2}})\right] u(x,t),
\end{equation*}%
with initial condition $u(x,0)=\delta (x).$ On the other hand, for $\alpha
=1/2$ and $\theta =-1/2$, we have another interesting special case given by $%
\mathcal{Z}_{1/2}(t)=\mathcal{S}_{1/2}(at+N(t)),$ $t\geq 0$, where $\mathcal{%
S}_{1/2}(t)$ is the L\'{e}vy subordinator. The latter is known to be equal
in distribution to the first passage time of a Brownian motion through the
level $t$, i,e, $T_{t}:=\inf \{s>0:W(s)\geq t\}.$ Therefore we have the
following equality in the sense of the finite-dimensional distributions
(i.d.) with the first passage time of a Brownian motion through the
trajectories of the process $N(t)+at,$ i.e.%
\begin{equation}
\mathcal{Z}_{1/2}(t)\overset{i.d.}{=}T_{t}^{a}=\inf \{s>0:W(s)\geq N(t)+at\}.
\label{ts}
\end{equation}%
The transition density of (\ref{ts}) is thus the solution to the equation%
\begin{equation*}
\partial _{t}u(x,t)=\left[ a\partial _{x}^{1/2}+\lambda (I-\mathcal{O}%
_{1,x}^{1/2,-1/2})\right] u(x,t),
\end{equation*}%
with initial condition $u(x,0)=\delta (x).$
\end{remark}

\section{Main results}

\subsection{Fractional diffusion-type equation with logarithmic differential
operator}

We study now the extension of the fractional diffusion equation obtained by
adding the logarithmic differential operator $\mathcal{P}_{c,x}^{\alpha }$
to (\ref{mai}), in the symmetric case (i.e. for $\theta =0$)$.$\ In analogy
with the previous results we show that this additional term introduces a
random element in the time argument of the corresponding stable process. In
this case, instead of the Poisson process, we have a gamma process. We
denote by $\Gamma (t),t\geq 0$ the gamma subordinator of parameters $\mu
,\rho >0,$ i.e. with density
\begin{equation}
f_{\Gamma }(x,t):=\Pr \left\{ \Gamma (t)\in dx\right\} /dx=\left\{
\begin{array}{l}
\frac{\rho ^{\mu t}}{\Gamma (\mu t)}x^{\mu t-1}e^{-\rho x},\qquad x\geq 0 \\
0,\qquad x<0%
\end{array}%
\right. .  \label{gam}
\end{equation}%
Note that, for $\mu =0$ the process $\Gamma (t)$ reduces to the elementary
subordinator $t.$

\begin{theorem}
Let $f\in L^{c}(\mathbb{R})$ satisfy the conditions given in Def.4, then the
solution to the following initial-value problem%
\begin{equation}
\left\{
\begin{array}{l}
\partial _{t}u(x,t)=\left[ a\mathcal{D}_{x}^{\alpha }+\mu \mathcal{P}%
_{1/\rho ,x}^{\alpha }\right] u(x,t) \\
u(x,0)=f(x).%
\end{array}%
\right. ,\quad x\in \mathbb{R},\text{ }t>0,\,\alpha \in (0,2],  \label{pr3}
\end{equation}%
coincides with the semigroup $\mathcal{T}_{t}^{\mathcal{X}}f(x)=\mathbb{E}f%
\left[ x-\mathcal{X}_{\alpha }(t)\right] $ of the following subordinated
process%
\begin{equation}
\mathcal{X}_{\alpha }(t)=\mathcal{S}_{\alpha }(at+\Gamma (t)),\quad t\geq 0,
\label{pr}
\end{equation}%
where $\mathcal{S}_{\alpha }(t),t\geq 0,$ is a symmetric stable process
defined in (\ref{ch2}) for $\theta =0$, with density $p_{\alpha }(x;t)$, and
$\Gamma (t),t\geq 0$ is an independent gamma subordinator with density (\ref%
{gam}).
\end{theorem}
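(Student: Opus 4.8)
The plan is to reproduce, for the logarithmic operator, the argument used for the exponential one in Lemma 8: write the candidate semigroup explicitly as a gamma‑mixture of stable densities, pass to the Fourier transform in $x$, reduce $(\ref{pr3})$ to a first‑order linear ODE in $t$ for each fixed $\xi$, and verify that the transform of $\mathcal{T}_t^{\mathcal{X}}f$ solves it with the prescribed datum; uniqueness is then immediate.

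First I would condition on $\Gamma(t)$ to write
\[
\mathcal{T}_t^{\mathcal{X}}f(x)=\mathbb{E}\,f\big[x-\mathcal{S}_\alpha(at+\Gamma(t))\big]
=\int_0^{+\infty}\!\!\int_{\mathbb{R}} f(x-y)\,p_\alpha(y,at+s)\,f_\Gamma(s,t)\,dy\,ds ,
\]
with $f_\Gamma$ as in $(\ref{gam})$. Taking Fourier transforms and using $\widetilde p_\alpha(\xi,\tau)=e^{-\tau|\xi|^\alpha}$ (which is $(\ref{ch2})$ with $\theta=0$ and $\sigma=1$) gives
\[
\widetilde u(\xi,t)=\widetilde f(\xi)\,e^{-a|\xi|^\alpha t}\int_0^{+\infty}e^{-|\xi|^\alpha s}f_\Gamma(s,t)\,ds
=\widetilde f(\xi)\,e^{-a|\xi|^\alpha t}\big(1+|\xi|^\alpha/\rho\big)^{-\mu t},
\]
where the last equality is the Laplace transform of the gamma density $(\ref{gam})$ evaluated at $|\xi|^\alpha$; hence $\widetilde u(\xi,t)=\widetilde f(\xi)\exp\{-t[a|\xi|^\alpha+\mu\ln(1+|\xi|^\alpha/\rho)]\}$.

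Next I would differentiate in $t$, obtaining $\partial_t\widetilde u(\xi,t)=-[a|\xi|^\alpha+\mu\ln(1+|\xi|^\alpha/\rho)]\,\widetilde u(\xi,t)$, and compare with the Fourier transform of the right‑hand side of $(\ref{pr3})$: by $(\ref{due})$ with $\theta=0$ the symbol of $\mathcal{D}_x^{\alpha}$ is $-|\xi|^\alpha$, and by Lemma 5 (formula $(\ref{sy2})$ with $c=1/\rho$) the symbol of $\mathcal{P}_{1/\rho,x}^{\alpha}$ is $-\ln(1+|\xi|^\alpha/\rho)$, so the two sides agree. The initial condition follows from $\widetilde u(\xi,0)=\widetilde f(\xi)$, equivalently from the weak convergence $f_\Gamma(\cdot,t)\Rightarrow\delta_0$ as $t\downarrow 0$ (since $\Gamma(0)=0$ a.s.) together with $p_\alpha(\cdot,0)=\delta_0$. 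For each fixed $\xi$ the transformed problem is a linear first‑order ODE in $t$ with datum $\widetilde f(\xi)$, hence has a unique solution — the one displayed above — and inverting the Fourier transform identifies $\mathcal{T}_t^{\mathcal{X}}f$ with the unique solution of $(\ref{pr3})$.

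I expect the genuinely delicate point to be the legitimacy of applying $\mathcal{P}_{1/\rho,x}^{\alpha}$ to $u=\mathcal{T}_t^{\mathcal{X}}f$ and of transforming its defining series term by term: by Lemma 5 that series only produces the multiplier $-\ln(1+|\xi|^\alpha/\rho)$ on the band $|\xi|<\rho^{1/\alpha}$, where $|\xi|^\alpha/\rho<1$, and uniform convergence of $\sum_{n\ge 1}\frac{(1/\rho)^n}{n}(\mathcal{D}_x^{\alpha})^n u$ is not automatic. I would handle this by first proving the theorem for $f$ with $\widetilde f$ supported in $\{|\xi|<\rho^{1/\alpha}\}$, where $(|\xi|^\alpha/\rho)^n$ decays geometrically on the support and every manipulation above is literal; for general $f$ one checks that the condition of Def.4 propagates to $u$ by carrying the iterated Riesz derivatives off the stable kernel and onto $f$ inside the integral (the Riesz derivative being a translation‑invariant Fourier multiplier), so that $\mathcal{P}_{1/\rho,x}^{\alpha}u(x,t)=\int_0^{+\infty}\!\int_{\mathbb{R}}\big(\mathcal{P}_{1/\rho}^{\alpha}f\big)(x-y)\,p_\alpha(y,at+s)f_\Gamma(s,t)\,dy\,ds$, after which the Fourier computation and the ODE identification go through unchanged; alternatively one may invoke the representation in Lemma 7. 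The rest — the Laplace‑transform evaluation and the $t$‑differentiation — is routine.
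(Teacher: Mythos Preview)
Your argument is essentially the same as the paper's: both take the Fourier transform in $x$, use the symbol of $\mathcal{D}_x^{\alpha}$ and Lemma~5 for $\mathcal{P}_{1/\rho,x}^{\alpha}$, and identify $\widetilde u(\xi,t)$ with $\widetilde f(\xi)\,e^{-a|\xi|^\alpha t}(1+|\xi|^\alpha/\rho)^{-\mu t}$ via the Laplace transform of the gamma subordinator, then match the resulting first-order ODE in $t$. Your extra paragraph on the domain and convergence issues for $\mathcal{P}_{1/\rho,x}^{\alpha}$ (the restriction $|\xi|<\rho^{1/\alpha}$ and how to propagate the condition of Def.~4 to $u$) goes beyond what the paper actually verifies, but the core identification is identical.
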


\begin{proof}
If we take the Fourier transform of the first equation in (\ref{pr3}) we
get, in view of (\ref{uno}) together with Lemma 5,%
\begin{equation*}
\partial _{t}\widetilde{u}(\xi ,t)=-\left[ a|\xi |^{\alpha }+\mu \ln (1+|\xi
|^{\alpha }/\rho )\right] \widetilde{u}(\xi ,t).
\end{equation*}%
Now we evaluate the characteristic function of (\ref{pr}), by considering
that $\mathbb{E}e^{-s\Gamma (t)}:=e^{-\Psi _{\Gamma }(s)t}=1/\left( 1+s/\rho
\right) ^{\mu t}$:

\begin{eqnarray*}
\mathbb{E}e^{i\xi \mathcal{X}_{\alpha }(t)} &=&\mathbb{E}\left[ \mathbb{E}%
\left( \left. e^{i\xi \mathcal{S}_{\alpha }(at+\Gamma (t))}\right\vert
\Gamma (t)\right) \right] =e^{-a|\xi |^{\alpha }t}\mathbb{E}e^{-|\xi
|^{\alpha }\Gamma (t)} \\
&=&\frac{e^{-a|\xi |^{\alpha }t}}{\left( 1+|\xi |^{\alpha }/\rho \right)
^{\mu t}}.
\end{eqnarray*}%
The time argument in (\ref{pr}) is represented by the gamma process with
drift, which is a L\'{e}vy process and also a subordinator, being strictly
increasing a.s. Then the process (\ref{pr}) is\ itself a L\'{e}vy process
and its L\'{e}vy symbol is%
\begin{eqnarray}
\eta _{\mathcal{X}_{\alpha }}(\xi ) &=&\frac{1}{t}\ln \left\{ \mathbb{E}%
e^{i\xi \mathcal{X}_{\alpha }(t)}\right\}  \label{lev5} \\
&=&-a|\xi |^{\alpha }-\mu \ln \left( 1+\frac{|\xi |^{\alpha }}{\rho }\right)
.  \notag
\end{eqnarray}%
Thus%
\begin{equation*}
\partial _{t}\widetilde{u}(\xi ,t)=-\eta _{\mathcal{X}_{\alpha }}(\xi )%
\widetilde{u}(\xi ,t)
\end{equation*}%
so that%
\begin{equation*}
u(x,t)=\mathcal{F}^{-1}\left\{ e^{-\eta _{\mathcal{X}_{\alpha }}(\xi )t}%
\widetilde{f}(\xi );x\right\} =\mathcal{T}_{t}^{\mathcal{X}}f(x)
\end{equation*}%
is the solution of (\ref{pr3}), where $\mathcal{F}^{-1}$ denotes the inverse
Fourier transform.
\end{proof}

\begin{remark}
The previous results are particularly interesting in the special case $%
\alpha =2$, since they imply that the solution to the p.d.e.%
\begin{eqnarray}
\partial _{t}u(x,t) &=&\left[ a\partial _{x}^{2}+\mu \mathcal{P}_{1/\rho
,x}^{2}\right] u(x,t)  \label{vg} \\
&=&\left[ a\partial _{x}^{2}-\mu \ln \left( 1+\frac{\partial _{x}^{2}}{\rho }%
\right) \right] u(x,t),  \notag
\end{eqnarray}%
coincides with the transition density of the process $W(at+\Gamma (t)),$ $%
t>0 $. For $a=0$, equation (\ref{vg}) provides the generator of the variance
gamma process, which can be explicitly written as%
\begin{equation*}
\mathcal{A}=-\ln \left( 1+\frac{\partial _{x}^{2}}{\rho }\right) ,
\end{equation*}%
by exploiting the semigroup property of the integer-order derivatives.
\end{remark}

It is evident from (\ref{lev5}) that $\mathcal{X}_{\alpha }$ can be
considered as a generalization of both stable and geometric stable processes
(see, for example, \cite{KOZ2}), to which it reduces in the special cases $%
\mu =0$ and $a=0,$ respectively. Again, by applying Theorem 30.1, p.197\ in
\cite{SATO}, we get, for $\alpha \in (0,2),$ the L\'{e}vy triplet:%
\begin{equation}
\nu _{\mathcal{X}_{\alpha }}(\cdot )=a\,\nu _{\mathcal{S}_{\alpha }}(\cdot
)+\mu \int_{0}^{+\infty }s^{-1}e^{-\rho s}p_{\alpha }(\cdot ;s)ds,
\label{lev}
\end{equation}%
\begin{equation*}
A_{\mathcal{X}_{\alpha }}=0
\end{equation*}%
and%
\begin{equation*}
\gamma _{\mathcal{X}_{\alpha }}=\mu \int_{0}^{+\infty }s^{-1}e^{-\rho
s}ds\int_{|x|\leq 1}xp_{\alpha }(x;s)dx=0,
\end{equation*}%
since the stable process is symmetric by assumption. From (\ref{lev}) we can
deduce that the asymptotic behavior of the L\'{e}vy measure at the origin,
for any positive $a$, is polynomial, as for the stable processes, while, for
the geometric stable, it is logarithmic (see \cite{KOZ3}).

For $\alpha =2,$ the L\'{e}vy measure of $\mathcal{X}_{2}(t)=W(at+\Gamma
(t)) $ is given instead by
\begin{equation}
\nu _{\mathcal{X}_{2}}(s)=\mu \int_{0}^{+\infty }\frac{e^{-s^{2}/2z}}{\sqrt{%
2\pi z^{3}}}e^{-\rho z}dz=\frac{\mu }{|s|}e^{-\sqrt{2\rho }|s|},
\label{lev3}
\end{equation}%
and the diffusion and drift parameters are respectively equal to%
\begin{equation}
A_{\mathcal{X}_{2}}=1,\qquad \gamma _{\mathcal{X}_{2}}=0.  \label{lev4}
\end{equation}%
We can compare (\ref{lev3}) and (\ref{lev4}) to the L\'{e}vy triplet of the
symmetric VG process $W(\Gamma (t)),t\geq 0$ (which corresponds to the
special case $a=0$): the L\'{e}vy measure is the same, but in the VG case
the diffusion coefficient is equal to zero (i.e. $A=0)$ and the process is a
pure jump process with infinitely many jumps and finite variation, since $%
\int_{|x|\leq 1}|x|\nu (dx)<\infty $. On the other hand here we have, in
view of Theorem 21.9. in \cite{SATO}, an infinite variation of almost all
paths of $\mathcal{X}_{2},$ since $A_{\mathcal{X}_{2}}\neq 0$, and thus it
is not a pure jump process. Jump-diffusion models are extensions of pure
jumps models, mixing a jump process and a diffusion process, particularly
useful in option pricing (see e.g. \cite{CON}).

For $\alpha =2,$ the first two moments exist and can be obtained by deriving
the characteristic function, which reads $\mathbb{E}e^{-i\xi \mathcal{X}%
_{2}(t)}=e^{-a\xi ^{2}t-\mu t\ln (1+\xi ^{2}/\rho )}$:%
\begin{eqnarray*}
\mathbb{E}\mathcal{X}_{2}(t) &=&0 \\
Var\mathcal{X}_{2}(t) &=&2t\left( a+\frac{\mu }{\rho }\right) .
\end{eqnarray*}%
In the general case, for $\alpha \in (0,2)$ we can only evaluate the
fractional moment of order $\gamma \in (-1,\alpha )$, by applying Theorem 3
in \cite{SHA},
\begin{eqnarray}
\mathbb{E}\mathcal{X}_{\alpha }^{\gamma }(t) &=&\int_{t}^{+\infty }\mathbb{E}%
\mathcal{S}_{\alpha }^{\gamma }(s)f_{\Gamma }(s-at,t)ds  \label{ser} \\
&=&\frac{2^{\gamma }\Gamma \left( 1-\frac{\gamma }{\alpha }\right) \Gamma
\left( \frac{1+\gamma }{2}\right) }{\sqrt{\pi }\Gamma \left( 1-\frac{\gamma
}{2}\right) }\int_{0}^{+\infty }\mathbb{(}at+s\mathbb{)}^{\gamma /\alpha
}f_{\Gamma }(s,t)ds  \notag \\
&=&\frac{2^{\gamma }\Gamma \left( 1-\frac{\gamma }{\alpha }\right) \Gamma
\left( \frac{1+\gamma }{2}\right) }{\sqrt{\pi }\Gamma \left( 1-\frac{\gamma
}{2}\right) }\sum_{j=0}^{\infty }\binom{\gamma /\alpha }{j}%
(at)^{j}\int_{0}^{+\infty }s^{\gamma /\alpha -j}f_{\Gamma }(s,t)ds  \notag \\
&=&\frac{\gamma 2^{\gamma }\Gamma \left( 1-\frac{\gamma }{\alpha }\right)
\Gamma \left( \frac{1+\gamma }{2}\right) \Gamma \left( \frac{\gamma }{\alpha
}\right) }{\alpha \sqrt{\pi }\Gamma \left( 1-\frac{\gamma }{2}\right) \Gamma
(\mu t)\rho ^{\gamma /\alpha }}\sum_{j=0}^{\infty }\frac{(\rho at)^{j}}{j!}%
\frac{\Gamma \left( \frac{\gamma }{\alpha }+\mu t-j\right) }{\Gamma \left(
\frac{\gamma }{\alpha }+1-j\right) }  \notag \\
&=&\frac{\gamma 2^{\gamma }\sqrt{\pi }2^{1-\gamma }\Gamma (\gamma )}{\alpha
\sin \left( \pi \gamma /\alpha \right) \Gamma \left( 1-\frac{\gamma }{2}%
\right) \Gamma \left( \frac{\gamma }{2}\right) \Gamma (\mu t)\rho ^{\gamma
/\alpha }}\,_{1}\Psi _{1}\left( \left. \rho at\right\vert \mycom{(\gamma
/\alpha +\mu t,-1)}{(\gamma /\alpha +1,-1)}\right)  \notag \\
&=&\frac{2\sin \left( \pi \gamma /2\right) \Gamma (\gamma +1)}{\alpha \sqrt{%
\pi }\sin \left( \pi \gamma /\alpha \right) \Gamma (\mu t)\rho ^{\gamma
/\alpha }}\,_{1}\Psi _{1}\left( \left. \rho at\right\vert \mycom{(\gamma
/\alpha +\mu t,-1)}{(\gamma /\alpha +1,-1)}\right) ,  \notag
\end{eqnarray}%
where $_{1}\Psi _{1}$ denotes the generalized Wright function with $p=q=1$
(see \cite{KIL}, p.56). By applying Theorem 1.5 in \cite{KIL}, p.58, it is
easy to check that the series in (\ref{ser}) is absolutely convergent for
all $t.$

Finally, we show that the tails' behavior of the density of $\mathcal{X}%
_{\alpha }(t),$ for any fixed $t,$ is the same (up to a different constant)
of those holding for both the stable and geometric stable random variables
(see \cite{SAMO}, p. 17, and \cite{KOZ}, respectively).

\begin{theorem}
For $\alpha \in (0,2)$, we have that%
\begin{equation}
\left\{
\begin{array}{c}
\lim_{x\rightarrow \infty }x^{\alpha }P(\mathcal{X}_{\alpha }(t)>x)=\frac{%
C_{\alpha ,\theta }\left( a+\frac{\mu }{\rho }\right) t}{\Gamma (1-\alpha )}
\\
\lim_{x\rightarrow \infty }x^{\alpha }P(\mathcal{X}_{\alpha }(t)<-x)=\frac{%
C_{\alpha ,\theta }^{\prime }\left( a+\frac{\mu }{\rho }\right) t}{\Gamma
(1-\alpha )}%
\end{array}%
\right. ,\qquad t\geq 0,  \label{tau}
\end{equation}%
where $C_{\alpha ,\theta }=\frac{1}{2}\left[ 1-\frac{\tan (\pi \alpha /2)}{%
\tan (\pi \theta /2)}\right] $ and $C_{\alpha ,\theta }^{\prime }=\frac{1}{2}%
\left[ 1+\frac{\tan (\pi \alpha /2)}{\tan (\pi \theta /2)}\right] $
\end{theorem}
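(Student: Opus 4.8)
The plan is to condition on the gamma subordinator, use the self-similarity of the stable process to reduce matters to the classical one-dimensional stable tail asymptotics, and then pass to the limit under the integral by dominated convergence.

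First I would write, for fixed $t\geq 0$ and $x>0$, using the independence in (\ref{pr}),
\[
P(\mathcal{X}_{\alpha }(t)>x)=\int_{0}^{+\infty }P\big(\mathcal{S}_{\alpha }(at+s)>x\big)\,f_{\Gamma }(s,t)\,ds .
\]
Since the characteristic exponent in (\ref{ch2}) is linear in $t$ and homogeneous of degree $\alpha$ in $\xi$, the stable process is self-similar, $\mathcal{S}_{\alpha }(u)\overset{d}{=}u^{1/\alpha }\mathcal{S}_{\alpha }(1)$ for $u>0$, so the integrand equals $P\big(\mathcal{S}_{\alpha }(1)>x(at+s)^{-1/\alpha }\big)$ and therefore
\[
x^{\alpha }P\big(\mathcal{S}_{\alpha }(at+s)>x\big)=(at+s)\,\varphi\!\left(\frac{x}{(at+s)^{1/\alpha }}\right),\qquad \varphi(y):=y^{\alpha }P\big(\mathcal{S}_{\alpha }(1)>y\big).
\]
By the classical tail behaviour of stable laws (see \cite{SAMO}, p.\,17), with the normalization $\sigma =(\cos \pi \theta /2)^{1/\alpha }$ fixed after (\ref{ch2}), one has $\varphi(y)\to C_{\alpha ,\theta }/\Gamma (1-\alpha )$ as $y\to \infty$; hence, for every fixed $s\geq 0$, $x^{\alpha }P(\mathcal{S}_{\alpha }(at+s)>x)\to (at+s)\,C_{\alpha ,\theta }/\Gamma (1-\alpha )$ as $x\to \infty$.

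To integrate this pointwise limit I would observe that $\varphi$ is bounded on all of $(0,+\infty )$: it is continuous there, admits the finite limit above at $+\infty $, and $\varphi(y)\leq y^{\alpha }\to 0$ as $y\to 0^{+}$. Setting $M:=\sup_{y>0}\varphi(y)<\infty$ gives the uniform domination $x^{\alpha }P(\mathcal{S}_{\alpha }(at+s)>x)\leq M(at+s)$, and $s\mapsto M(at+s)$ is $f_{\Gamma }(\cdot ,t)$-integrable because the gamma subordinator has finite mean $\mathbb{E}\Gamma (t)=\mu t/\rho $, read off from (\ref{gam}). Dominated convergence then yields
\[
\lim_{x\to \infty }x^{\alpha }P(\mathcal{X}_{\alpha }(t)>x)=\frac{C_{\alpha ,\theta }}{\Gamma (1-\alpha )}\int_{0}^{+\infty }(at+s)\,f_{\Gamma }(s,t)\,ds=\frac{C_{\alpha ,\theta }}{\Gamma (1-\alpha )}\Big(a+\frac{\mu }{\rho }\Big)t,
\]
which is the first relation in (\ref{tau}); the second one is obtained verbatim, replacing the right tail of $\mathcal{S}_{\alpha }(1)$ by its left tail, whose rescaled limit is $C'_{\alpha ,\theta }/\Gamma (1-\alpha )$.

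The only genuinely delicate step is the interchange of limit and integral: it requires an integrable bound for the rescaled tail $x^{\alpha }P(\mathcal{S}_{\alpha }(at+s)>x)$ valid uniformly in $x$, and this is exactly what the boundedness of $\varphi$ on the whole half-line (not merely near infinity), combined with the finiteness of the first moment of $\Gamma (t)$, provides. The remaining ingredients — conditioning, self-similarity, and the known stable tail asymptotics — are standard.
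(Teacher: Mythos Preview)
Your argument is correct and is in fact more transparent than the route taken in the paper. The paper first treats the totally skewed case $\alpha\in(0,1)$, $\theta=-\alpha$, by computing the Laplace transform of the tail and invoking a Tauberian theorem (Feller, Theorem~XIII-5-4) to extract the $x^{-\alpha}$ decay; it then passes to general $\theta$ for $\alpha\in(0,1)$ via the decomposition formula (1.2.6) of \cite{SAMO}, and finally handles $\alpha\geq 1$, $\theta=0$ by adapting the product representation of Proposition~1.3.1 in \cite{SAMO}, writing the subordinated variable as $(A_{\alpha/\alpha'})^{1/\alpha'}X_{\alpha'}^{\Gamma}$ with an auxiliary index $\alpha'>\alpha$. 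Your approach bypasses all of this case splitting: conditioning on $\Gamma(t)$, strict self-similarity of $\mathcal{S}_{\alpha}$, and the boundedness of $y\mapsto y^{\alpha}P(\mathcal{S}_{\alpha}(1)>y)$ together give a one-line dominated-convergence proof valid uniformly for $\alpha\in(0,2)$. The paper's method has the mild advantage of not needing self-similarity explicitly (only the Laplace exponent), which matters if one wants to extend to non-strictly-stable cases such as $\alpha=1$ with $\beta\neq 0$; your method, on the other hand, is shorter, avoids Tauberian machinery entirely, and makes the appearance of the factor $(a+\mu/\rho)t=\mathbb{E}[at+\Gamma(t)]$ completely transparent as the mean of the random time change.
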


\begin{proof}
We start by considering the process $\mathcal{S}_{\alpha ,\theta }(at+\Gamma
(t)),$ $t\geq 0$, in the special case $\alpha \in (0,1),$ $\theta =-\alpha $%
, for which we can write that%
\begin{eqnarray*}
\int_{0}^{+\infty }e^{-\eta x}P(\mathcal{S}_{\alpha ,-\alpha }(at+\Gamma
(t))>x)dx &=&\frac{1-\mathbb{E}e^{-\eta \mathcal{S}_{\alpha ,-\alpha
}(at+\Gamma (t))}}{\eta } \\
&=&\frac{1-\exp \{-a\eta ^{\alpha }t\}-\left( 1+\frac{\eta ^{\alpha }}{\rho }%
\right) ^{\mu t}}{\eta } \\
&\sim &\left( at+\frac{\mu t}{\rho }\right) \eta ^{\alpha -1},
\end{eqnarray*}%
for any fixed $t$ and for $\eta \rightarrow 0.$ By applying the Tauberian
theorem (see Theorem XIII-5-4, p.446, in \cite{FEL}), we obtain the first
equation in (\ref{tau}), with $C_{\alpha ,\theta }=1$. The case $\alpha \in
(0,1),$ $|\theta |\leq \alpha $ can be obtained by equation (1.2.6) in \cite%
{SAMO}, while for $\alpha \geq 1$ and $\theta =0$ we adapt Proposition 1.3.1
in \cite{SAMO}, p.20, to the r.v. $\mathcal{S}_{\alpha }(at+\Gamma (t))$,
for fixed $t$. Let $A_{\alpha /\alpha ^{\prime }}$ be a (totally skewed to
the right) stable r.v. of index $\alpha /\alpha ^{\prime },$ with $\alpha
^{\prime }>\alpha $ and $\theta =-\alpha /\alpha ^{\prime },\sigma ^{\prime
} $ =$\left( \cos \frac{\pi \alpha }{2\alpha ^{\prime }}\right) ^{\alpha
^{\prime }/\alpha }$ with Laplace transform
\begin{equation*}
\mathbb{E}e^{-sA_{\alpha /\alpha ^{\prime }}}=\exp \left\{ -s^{\alpha
/\alpha ^{\prime }}\right\}
\end{equation*}%
and let $X_{\alpha ^{\prime }}^{\Gamma }$ be a (symmetric) stable r.v. of
index $\alpha ^{\prime },$ $\theta =0$ and with
\begin{equation*}
\mathbb{E}\left\{ \left. e^{i\xi X_{\alpha ^{\prime }}^{\Gamma }}\right\vert
\Gamma \right\} =\exp \{-|\xi |^{\alpha ^{\prime }}\left[ at+\Gamma (t)%
\right] ^{\alpha ^{\prime }/\alpha }\}.
\end{equation*}%
Thus we prove that
\begin{equation*}
Z_{\alpha }:=\left( A_{\alpha /\alpha ^{\prime }}\right) ^{1/\alpha ^{\prime
}}X_{\alpha ^{\prime }}
\end{equation*}%
is a stable r.v. of index $\alpha $, with parameters $\theta =0$: indeed we
can write%
\begin{eqnarray*}
\mathbb{E}e^{i\xi Z_{\alpha }} &=&\mathbb{E}e^{i\xi A_{\alpha /\alpha
^{\prime }}^{1/\alpha ^{\prime }}X_{\alpha ^{\prime }}}=\mathbb{E}\left\{
\mathbb{E}\left[ \left. e^{i\xi \left( A_{\alpha /\alpha ^{\prime }}\right)
^{1/\alpha ^{\prime }}X_{\alpha ^{\prime }}^{\Gamma }}\right\vert A_{\alpha
/\alpha ^{\prime }}\right] \right\} \\
&=&\mathbb{E}\left\{ \mathbb{E}\left[ \left. \mathbb{E}\left( \left. e^{i\xi
\left( A_{\alpha /\alpha ^{\prime }}\right) ^{1/\alpha ^{\prime }}X_{\alpha
^{\prime }}}\right\vert \Gamma \right) \right\vert A_{\alpha /\alpha
^{\prime }}\right] \right\} \\
&=&\mathbb{E}\left\{ \mathbb{E}\left[ \left. e^{-|\xi |^{\alpha ^{\prime }}%
\left[ at+\Gamma (t)\right] ^{\alpha ^{\prime }/\alpha }A_{\alpha /\alpha
^{\prime }}}\right\vert A_{\alpha /\alpha ^{\prime }}\right] \right\} \\
&=&\mathbb{E}e^{-|\xi |^{\alpha }\left[ at+\Gamma (t)\right] },
\end{eqnarray*}%
which is the characteristic function of the r.v. $\mathcal{S}_{\alpha
}(at+\Gamma (t))$, for fixed $t.$
\end{proof}

\subsection{Nonlinear fractional diffusion-type equation}

We consider now a non-linear extension of the equation (\ref{pp}), defined
as follows%
\begin{equation}
\partial _{t}u(x,t)=\left[ a\mathcal{D}_{x}^{\alpha ,\theta }+\lambda
at\left( I-\mathcal{O}_{1,x}^{\alpha ,\theta }\right) \right] u(x,t)-\lambda
\left( I-\mathcal{O}_{1,x}^{\alpha ,\theta }\right) I_{\theta }^{\alpha -1}%
\left[ xu(x,t)\right] ,  \label{ext}
\end{equation}%
and we prove that, for $\alpha \in (1,2),$ and under the initial condition $%
u(x,0)=p_{\alpha }^{\theta }(x,1),$ the solution coincides with the
transition density of the stable process $\mathcal{S}_{\alpha }^{\theta }$
time-changed by the so-called linear birth process with drift.

The form of the above equation is suggested by the following preliminary
result. Let $B(t)$ $t\geq 0,$ be a linear birth (Yule-Furry) process with
one progenitor and parameter $\lambda >0$. We recall that it is a Markov and
(a.s.) non-decreasing process, with one-dimensional distribution
\begin{equation*}
q_{k}(t):=\Pr \{\left. B(t)=k\right\vert B(0)=1\}=e^{-\lambda
t}(1-e^{-\lambda t})^{k-1},\quad k=1,2,...
\end{equation*}%
which is solution to the initial-value problem%
\begin{equation}
\frac{d}{dt}q_{k}(t)=-\lambda kq_{k}(t)+\lambda (k-1)q_{k-1}(t),\quad
q_{k}(0)=1_{k=1}.  \label{q3}
\end{equation}

\begin{lemma}
The density $q_{a}(x,t)$ of the linear birth process with positive drift,
defined as $B(t)+at,$ $a,t\geq 0$, satisfies the following equation:%
\begin{equation}
\partial _{t}u(x,t)=a\left[ \lambda t\left( I-e^{-\partial _{x}}\right) -%
\mathcal{\partial }_{x}\right] u(x,t)-\lambda \left( I-e^{-\partial
_{x}}\right) \left[ xu(x,t)\right] ,\qquad x\geq at+1,\text{ }t\geq 0,
\label{pre1}
\end{equation}%
with initial condition $u(x,0)=\delta (x-1).$
\end{lemma}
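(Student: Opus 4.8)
The plan is to verify that the shifted Yule density $q_a(x,t):=q_k(t)$ evaluated at $x=k+at$ (that is, the law of $B(t)+at$, which is supported on $\{k+at:k\ge 1\}$) satisfies the stated PDE, by translating the forward Kolmogorov equation \eqref{q3} for $q_k(t)$ into a statement about $u(x,t)=q_a(x,t)$. The key observation is the same one used in Lemma 8: since the drift is deterministic and equal to $a$, a $t$-derivative of $u(x,t)=q_a(x,t)$ at fixed $x$ picks up both the intrinsic time-dependence of $q_k(t)$ and the drift term, so that $\partial_t u(x,t)=\big[\tfrac{d}{dt}q_k(t)\big]_{k=x-at}-a\,\partial_x u(x,t)$. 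This is where the $-a\partial_x u$ term in \eqref{pre1} originates.

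First I would write $u(x,t)=\sum_{k\ge1}q_k(t)\,\delta(x-k-at)$ and compute $\partial_t u$ directly, using \eqref{q3} for the factors $\frac{d}{dt}q_k(t)=-\lambda k q_k(t)+\lambda(k-1)q_{k-1}(t)$ and the chain rule $\partial_t\delta(x-k-at)=-a\partial_x\delta(x-k-at)$. Collecting the drift contribution gives the term $-a\partial_x u(x,t)$. For the remaining part, I would reindex the sum $\sum_k \lambda(k-1)q_{k-1}(t)\delta(x-k-at)=\sum_k \lambda k\, q_k(t)\delta(x-(k+1)-at)$, which is exactly $e^{-\partial_x}$ applied to $\sum_k \lambda k\,q_k(t)\delta(x-k-at)$, since $e^{-\partial_x}g(x)=g(x-1)$ (Remark 3). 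Hence the non-drift part of $\partial_t u$ equals $-\lambda\big(I-e^{-\partial_x}\big)\sum_k k\,q_k(t)\delta(x-k-at)$. Finally, on the support $x=k+at$ one has $k=x-at$, so $\sum_k k\, q_k(t)\delta(x-k-at)=(x-at)\,u(x,t)=x\,u(x,t)-at\,u(x,t)$. Substituting this identity splits the term into $-\lambda(I-e^{-\partial_x})[x\,u(x,t)]+\lambda at\,(I-e^{-\partial_x})u(x,t)$, and combining with the drift term $-a\partial_x u$ yields precisely the right-hand side of \eqref{pre1}. The initial condition $u(x,0)=\delta(x-1)$ is immediate from $q_k(0)=1_{k=1}$ and $a\cdot0=0$.

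The restriction $x\ge at+1$ simply reflects that $B(t)+at\ge 1+at$ a.s., so the identity $k=x-at\ge1$ used in the last step is valid on the support; off the support both sides vanish. The main (and only genuine) obstacle is bookkeeping: making the reindexing $k-1\mapsto k$ and the substitution $k=x-at$ rigorous at the level of distributions, and checking that the boundary term at $k=1$ in the reindexed sum does not contribute (it does not, because the coefficient $\lambda(k-1)$ vanishes at $k=1$). A cleaner, fully rigorous variant — which I would actually present — is to test \eqref{pre1} against a Fourier transform in $x$ or against smooth test functions, reducing everything to the scalar recursion \eqref{q3}; in Fourier space $e^{-\partial_x}$ becomes multiplication by $e^{-i\xi}$ and $x\,u$ becomes $-i\partial_\xi\widetilde u$, and the PDE collapses to an identity that follows termwise from \eqref{q3}. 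Either route avoids any serious computation.
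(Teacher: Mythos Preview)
Your argument is correct and is actually more transparent than the paper's. The paper works entirely on the Fourier side: it writes down the closed-form characteristic function $\Phi_{B(t)+at}(\xi)=e^{-\lambda t+i\xi(1+at)}\big/[1-(1-e^{-\lambda t})e^{i\xi}]$, differentiates it in $t$, separately evaluates $\mathcal{F}\{xq_a(x,t);\xi\}$ by summing the geometric-type series, and then matches the two expressions to read off \eqref{pre1}. Your primary route instead stays in physical space and derives the PDE directly from the forward Kolmogorov equation \eqref{q3} via the distributional identity $k\,\delta(x-k-at)=(x-at)\,\delta(x-k-at)$; this makes the structural origin of each term completely visible (drift $\to -a\partial_x$, birth mechanism $\to -\lambda(I-e^{-\partial_x})$ acting on $\sum_k k\,q_k\,\delta$, then $k=x-at$ splits off the $\lambda a t$ piece). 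The paper's approach has the advantage of avoiding delta-function bookkeeping, while yours is more conceptual, uses \eqref{q3} as a black box, and would generalise immediately to other time-changes by Markov chains with linear rates. Even your ``cleaner Fourier variant'' differs from the paper's: you propose reducing termwise to \eqref{q3}, whereas the paper never invokes \eqref{q3} in the proof and instead differentiates the summed closed form. One small slip: with the paper's Fourier convention $\widetilde f(\xi)=\int e^{ix\xi}f(x)\,dx$, the symbol of $e^{-\partial_x}$ is $e^{i\xi}$, not $e^{-i\xi}$ as you wrote in your final sentence; this does not affect your main argument.
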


\begin{proof}
It is easy to check that the characteristic function of $B(t)+at$ is equal to%
\begin{equation}
\Phi _{B(t)+at}(\xi ):=\mathbb{E}e^{i\xi \lbrack B(t)+at]}=\frac{e^{-\lambda
t+i\xi +i\xi at}}{1-(1-e^{-\lambda t})e^{i\xi }},  \label{cff}
\end{equation}%
so that
\begin{eqnarray}
\partial _{t}\Phi _{B(t)+at}(\xi ) &=&(-\lambda +i\xi a)\Phi _{B(t)+at}(\xi
)+\lambda \frac{e^{-\lambda t+i\xi }}{1-(1-e^{-\lambda t})e^{i\xi }}\Phi
_{B(t)+at}(\xi )  \label{pre2} \\
&=&i\xi a\Phi _{B(t)+at}(\xi )-\lambda \frac{1-(1-e^{-\lambda t})e^{i\xi
}-e^{-\lambda t+i\xi }}{1-(1-e^{-\lambda t})e^{i\xi }}\Phi _{B(t)+at}(\xi )
\notag \\
&=&i\xi a\Phi _{B(t)+at}(\xi )-\lambda (1-e^{i\xi })\frac{\Phi
_{B(t)+at}(\xi )}{1-(1-e^{-\lambda t})e^{i\xi }}.  \notag
\end{eqnarray}%
We now concentrate to the last fraction and we consider the following fact%
\begin{eqnarray}
\mathcal{F}\{xq_{a}(x,t);\xi \} &=&e^{-\lambda t}\sum_{k=1}^{\infty
}(1-e^{-\lambda t})^{k-1}\int_{-\infty }^{+\infty }e^{ix\xi }x\delta
(x-k-at)dx  \label{dd} \\
&=&e^{-\lambda t}\sum_{k=1}^{\infty }(1-e^{-\lambda t})^{k-1}e^{i\xi
(k+at)}(k+at)  \notag \\
&=&at\Phi _{B(t)+at}(\xi )+\frac{\Phi _{B(t)+at}(\xi )}{1-(1-e^{-\lambda
t})e^{i\xi }},  \notag
\end{eqnarray}%
so that we can rewrite (\ref{pre2}) as%
\begin{equation}
\partial _{t}\Phi _{B(t)+at}(\xi )=i\xi a\Phi _{B(t)+at}(\xi )-\lambda
(1-e^{i\xi })\left[ \mathcal{F}\{xq_{a}(x,t);\xi \}-at\Phi _{B(t)+at}(\xi )%
\right] .  \notag
\end{equation}%
The last equation coincides with the Fourier transform of (\ref{pre1}). The
initial condition is satisfied, as can be checked by considering (\ref{cff})
for $t=0.$
\end{proof}

\begin{remark}
We note that equation (\ref{pre1}), for $a\neq 0,$ is an extension to the
continuous domain of the equation (\ref{q3}) governing the usual birth
process, to which it reduces in the limit, for $a\rightarrow 0.$ In the last
limiting case the shift operator $e^{-\partial _{x}}$ is replaced by the
backward difference operator $\Delta $, defined as $\Delta f(k)=f(k-1).$
\end{remark}

The linear and non-linear birth processes have been treated in the
fractional case (by considering a fractional time-derivative in (\ref{q3}))
by \cite{ORS} and \cite{ORS2}; see also \cite{ALI}. In the last reference
the birth process subordinated by an independent stable subordinator (i.e. $%
B(\mathcal{S}_{\alpha }^{\theta }(t))$, for $\theta =-\alpha $) is
considered.

We now define the following process

\begin{equation}
\mathcal{Y}_{\alpha }^{\theta }(t):=\mathcal{S}_{\alpha }^{\theta
}(at+B(t)),\qquad t>0,  \label{sub}
\end{equation}%
where $B$ is independent of $\mathcal{S}_{\alpha }^{\theta }.$ We recall
that (\ref{sub}) cannot be indicated as a `subordinated' process, since $B$
is not a subordinator, but we will refer to it as a randomly `time-changed'
process. For an overview on time change, see \cite{VERW} and the references
therein. The characteristic function of $\mathcal{Y}_{\alpha }^{\theta
}(t),t\geq 0$, can be evaluated as follows%
\begin{eqnarray}
\Phi _{\mathcal{Y}_{\alpha }^{\theta }(t)}(\xi ) &:&=\mathbb{E}e^{i\xi
\mathcal{Y}_{\alpha }^{\theta }(t)}=\mathbb{E}\left( \left. \mathbb{E}%
e^{i\xi \mathcal{S}_{\alpha }^{\theta }(at+B(t))}\right\vert B(t)\right)
\label{ch} \\
&=&e^{-a\psi _{\alpha ,\theta }(\xi )t}\mathbb{E}e^{-|\psi _{\alpha ,\theta
}(\xi )B(t)}=\frac{e^{-\lambda t-at\psi _{\alpha ,\theta }(\xi )-\psi
_{\alpha ,\theta }(\xi )}}{1-(1-e^{-\lambda t})e^{-\psi _{\alpha ,\theta
}(\xi )}},  \notag
\end{eqnarray}%
by recalling the Laplace transform of the Yule-Furry process, i.e.%
\begin{equation*}
\mathbb{E}e^{-sB(t)}=\frac{e^{-s-\lambda t}}{1-e^{-s}(1-e^{-\lambda t})}%
,\qquad \mathcal{R}e(s)>0
\end{equation*}%
and considering that $\mathcal{R}e(\psi _{\alpha ,\theta }(\xi ))=\mathcal{R}%
e(|\xi |^{\alpha }e^{i\,sign(\xi )\theta \pi /2})=|\xi |^{\alpha }\cos
(\theta \pi /2)\geq 0,$ for $|\theta |\leq \min \{\alpha ,2-\alpha \}$ and $%
\alpha \in (0,2].$

As it is evident from (\ref{ch}), the process $\mathcal{Y}_{\alpha }^{\theta
}(t),$ $t\geq 0$ is not L\'{e}vy, even though it is still Markov.

We now restrict our analysis to the case $\alpha \in (1,2],$ so that the
following holds
\begin{equation}
\left\vert \int_{-\infty }^{+\infty }e^{i\xi x}xp_{\alpha }^{\theta
}(x,t)dx\right\vert \leq \int_{-\infty }^{+\infty }\left\vert x\right\vert
p_{\alpha }^{\theta }(x,t)dx<\infty  \label{dom}
\end{equation}

\begin{lemma}
Let $\mathcal{F}\{xu(x,t);\xi \}:=\mathbb{E}\left( \mathcal{Y}_{\alpha
}^{\theta }(t)e^{i\xi \mathcal{Y}_{\alpha }^{\theta }(t)}\right) .$ Then,
for $\alpha \in (1,2]$ and $|\theta |\leq 2-\alpha $, the characteristic
function of $\mathcal{Y}_{\alpha }^{\theta }(t),t\geq 0$, satisfies the
following equation
\begin{eqnarray}
\partial _{t}\widetilde{u}(\xi ,t) &=&a[-\psi _{\alpha ,\theta }(\xi
)+\lambda t(1-e^{-\psi _{\alpha ,\theta }(\xi )})]\widetilde{u}(\xi ,t)
\label{lem} \\
&&+\frac{\lambda (1-e^{-\psi _{\alpha ,\theta }(\xi )})}{\alpha }|\xi
|^{1-\alpha }e^{-i\,sign(\xi )(\theta -1)\pi /2}\mathcal{F}\{xu(x,t);\xi \},
\notag
\end{eqnarray}%
with initial condition $\widetilde{u}(\xi ,0)=e^{-\psi _{\alpha ,\theta
}(\xi )}.$
\end{lemma}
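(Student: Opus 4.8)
The plan is to start from the explicit characteristic function of $\mathcal{Y}_{\alpha}^{\theta}(t)$ in (\ref{ch}), namely
\begin{equation*}
\widetilde{u}(\xi,t)=\Phi_{\mathcal{Y}_{\alpha}^{\theta}(t)}(\xi)
=\frac{e^{-\lambda t-at\psi_{\alpha,\theta}(\xi)-\psi_{\alpha,\theta}(\xi)}}
{1-(1-e^{-\lambda t})e^{-\psi_{\alpha,\theta}(\xi)}},
\end{equation*}
differentiate it in $t$, and recognize the resulting terms. The differentiation is structurally identical to the computation in Lemma 15 for $B(t)+at$: writing $w=e^{-\psi_{\alpha,\theta}(\xi)}$ in place of $e^{i\xi}$, and noting that the drift term $i\xi a$ in (\ref{pre2}) is replaced here by $-a\psi_{\alpha,\theta}(\xi)$ because the outer process is the stable process rather than the identity. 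Carrying this through, $\partial_t\widetilde{u}$ splits into a ``drift'' piece $-a\psi_{\alpha,\theta}(\xi)\widetilde{u}(\xi,t)$ and a ``birth'' piece $-\lambda(1-e^{-\psi_{\alpha,\theta}(\xi)})\cdot\widetilde{u}(\xi,t)/\bigl(1-(1-e^{-\lambda t})e^{-\psi_{\alpha,\theta}(\xi)}\bigr)$, exactly as in the chain of equalities leading to (\ref{pre2}).

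Next I would handle the last fraction by the same device used in (\ref{dd}): express $\widetilde{u}(\xi,t)/\bigl(1-(1-e^{-\lambda t})e^{-\psi_{\alpha,\theta}(\xi)}\bigr)$ in terms of $\mathcal{F}\{xu(x,t);\xi\}$. Conditioning on $B(t)=k$ and using that the density of $\mathcal{S}_{\alpha}^{\theta}(at+k)$ is $p_{\alpha}^{\theta}(\cdot,at+k)$, one gets
\begin{equation*}
\mathcal{F}\{xu(x,t);\xi\}=e^{-\lambda t}\sum_{k=1}^{\infty}(1-e^{-\lambda t})^{k-1}
\int_{-\infty}^{+\infty}e^{i\xi x}\,x\,p_{\alpha}^{\theta}(x,at+k)\,dx.
\end{equation*}
Since $\mathcal{F}\{x\,p_{\alpha}^{\theta}(x,s);\xi\}=-i\,\partial_{\xi}\widetilde{p}_{\alpha}^{\theta}(\xi,s)=-i\,\partial_{\xi}e^{-s\psi_{\alpha,\theta}(\xi)}$ and $\partial_{\xi}\psi_{\alpha,\theta}(\xi)=\alpha|\xi|^{\alpha-1}\mathrm{sign}(\xi)e^{i\,\mathrm{sign}(\xi)\theta\pi/2}$, this produces a factor $is\,\alpha|\xi|^{\alpha-1}\mathrm{sign}(\xi)e^{i\,\mathrm{sign}(\xi)\theta\pi/2}\,e^{-s\psi_{\alpha,\theta}(\xi)}$ inside the sum with $s=at+k$. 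Splitting $s=at+k$ as $at\cdot(\text{a copy of }\widetilde{u})$ plus the geometric-series contribution (the $k$-weighted sum, which is exactly the fraction we want to isolate), and solving algebraically for $\widetilde{u}(\xi,t)/\bigl(1-(1-e^{-\lambda t})e^{-\psi_{\alpha,\theta}(\xi)}\bigr)$, yields that fraction as a constant multiple of $|\xi|^{1-\alpha}e^{-i\,\mathrm{sign}(\xi)\theta\pi/2}\cdot\tfrac{1}{i\,\mathrm{sign}(\xi)}\cdot\tfrac{1}{\alpha}$ times $\bigl[\mathcal{F}\{xu(x,t);\xi\}-at\,\widetilde{u}(\xi,t)\bigr]$, after using $1/(i\,\mathrm{sign}(\xi))=e^{-i\,\mathrm{sign}(\xi)\pi/2}$ to combine the phases into $e^{-i\,\mathrm{sign}(\xi)(\theta-1)\pi/2}$. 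Substituting back gives precisely (\ref{lem}); the $\lambda t\,a(1-e^{-\psi_{\alpha,\theta}(\xi)})\widetilde{u}$ term is the $at\,\widetilde{u}$ correction reabsorbed into the first bracket. Finally, the initial condition $\widetilde{u}(\xi,0)=e^{-\psi_{\alpha,\theta}(\xi)}$ follows by setting $t=0$ in (\ref{ch}), since $B(0)=1$ gives $\mathcal{Y}_{\alpha}^{\theta}(0)=\mathcal{S}_{\alpha}^{\theta}(1)$.

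The main obstacle is the bookkeeping in the step that converts the geometric-type fraction into an expression involving $\mathcal{F}\{xu(x,t);\xi\}$: one must carefully track the $is$ factor coming from differentiating $e^{-s\psi_{\alpha,\theta}(\xi)}$ in $\xi$, separate the $at$ part from the $k$ part of $s=at+k$, and correctly collect the phase factors $\mathrm{sign}(\xi)$, $e^{\pm i\,\mathrm{sign}(\xi)\theta\pi/2}$ and $e^{-i\,\mathrm{sign}(\xi)\pi/2}$ into the single exponential $e^{-i\,\mathrm{sign}(\xi)(\theta-1)\pi/2}$ and the prefactor $|\xi|^{1-\alpha}/\alpha$. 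The restriction $\alpha\in(1,2]$ with $|\theta|\le 2-\alpha$ is exactly what makes (\ref{dom}) valid, so that $\mathcal{F}\{xu(x,t);\xi\}=\mathbb{E}\bigl(\mathcal{Y}_{\alpha}^{\theta}(t)e^{i\xi\mathcal{Y}_{\alpha}^{\theta}(t)}\bigr)$ is well defined and the term-by-term manipulation of the series (justified by absolute convergence, since $|1-e^{-\lambda t}|<1$ and $|e^{-\psi_{\alpha,\theta}(\xi)}|\le 1$) is legitimate; I would state this at the outset. Differentiation under the summation sign in $t$ is likewise justified by uniform convergence on compact $t$-intervals away from where the denominator vanishes.
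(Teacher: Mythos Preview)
Your proposal is correct and follows essentially the same route as the paper: differentiate the explicit formula (\ref{ch}) in $t$ to obtain $\partial_t\widetilde{u}=-a\psi_{\alpha,\theta}\widetilde{u}-\lambda(1-e^{-\psi_{\alpha,\theta}})\,\widetilde{u}/\bigl(1-(1-e^{-\lambda t})e^{-\psi_{\alpha,\theta}}\bigr)$, then express that last fraction via $\mathcal{F}\{xu(x,t);\xi\}$ by conditioning on $B(t)$ and using $\int e^{i\xi x}xp_{\alpha}^{\theta}(x,s)\,dx=-i\,\partial_\xi e^{-s\psi_{\alpha,\theta}(\xi)}$, exactly as in the paper's equations (\ref{bb}) and (\ref{tt}). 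One small slip to watch in your bookkeeping: the fraction equals $C^{-1}\mathcal{F}\{xu;\xi\}-at\,\widetilde{u}$, not $C^{-1}\bigl[\mathcal{F}\{xu;\xi\}-at\,\widetilde{u}\bigr]$ (with $C=i\alpha\,\mathrm{sign}(\xi)|\xi|^{\alpha-1}e^{i\,\mathrm{sign}(\xi)\theta\pi/2}$), and it is this ``bare'' $at\,\widetilde{u}$ that, after multiplication by $-\lambda(1-e^{-\psi_{\alpha,\theta}})$, produces the $a\lambda t(1-e^{-\psi_{\alpha,\theta}})\widetilde{u}$ term in the first bracket of (\ref{lem}); the phase $e^{-i\,\mathrm{sign}(\xi)(\theta-1)\pi/2}$ then comes from $-C^{-1}$ rather than $C^{-1}$.
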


\begin{proof}
Let
\begin{equation}
u(x,t)=\mathcal{F}^{-1}\left\{ \Phi _{\mathcal{Y}_{\alpha }^{\theta
}(t)}(\xi );x\right\} =\sum_{k=1}^{\infty }e^{-\lambda t}(1-e^{-\lambda
t})^{k-1}p_{\alpha }^{\theta }(x,k+at)  \label{uu}
\end{equation}%
be the transition density of the process $\mathcal{Y}_{\alpha }^{\theta },$
then we apply a conditioning argument and the dominated convergence theorem
(considering (\ref{dom})), to show that%
\begin{eqnarray}
\mathcal{F}\{xu(x,t);\xi \} &=&\mathbb{E}\left\{ \mathbb{E}\left[ \left.
\mathcal{S}_{\alpha }^{\theta }(B(t)+at)e^{i\xi \mathcal{S}_{\alpha
}^{\theta }(B(t)+at)}\right\vert B(t)\right] \right\}  \label{bb} \\
&=&\sum_{k=1}^{\infty }e^{-\lambda t}(1-e^{-\lambda t})^{k-1}\int_{-\infty
}^{+\infty }e^{i\xi x}xp_{\alpha }^{\theta }(x,k+at)dx  \notag \\
&=&\frac{1}{i}\sum_{k=1}^{\infty }e^{-\lambda t}(1-e^{-\lambda
t})^{k-1}\int_{-\infty }^{+\infty }\partial _{\xi }e^{i\xi x}p_{\alpha
}^{\theta }(x,k+at)dx  \notag \\
&=&\frac{1}{i}\sum_{k=1}^{\infty }e^{-\lambda t}(1-e^{-\lambda
t})^{k-1}\partial _{\xi }e^{-\psi _{\alpha ,\theta }(\xi )(k+at)}  \notag \\
&=&i\partial _{\xi }\psi _{\alpha ,\theta }(\xi )e^{-\lambda t-a\psi
_{\alpha ,\theta }(\xi )t}\sum_{k=1}^{\infty }(1-e^{-\lambda
t})^{k-1}(k+at)e^{-\psi _{\alpha ,\theta }(\xi )k}  \notag \\
&=&\alpha i\,sign(\xi )|\xi |^{\alpha -1}e^{i\,sign(\xi )\theta \pi /2}\left[
\frac{\Phi _{\mathcal{Y}_{\alpha }^{\theta }(t)}(\xi )}{1-(1-e^{-\lambda
t})e^{-\psi _{\alpha ,\theta }(\xi )}}+at\Phi _{\mathcal{Y}_{\alpha
}^{\theta }(t)}(\xi )\right] ,  \notag
\end{eqnarray}%
where, in the last step, we have considered that%
\begin{equation*}
\partial _{\xi }\psi _{\alpha ,\theta }(\xi )=\partial _{\xi }|\xi |^{\alpha
}e^{i\,sign(\xi )\theta \pi /2}=\alpha \,sign(\xi )|\xi |^{\alpha
-1}e^{i\,sign(\xi )\theta \pi /2}.
\end{equation*}%
As a check, we notice that, for $\alpha \rightarrow 1^{+},$ $\theta =-1,$ (%
\ref{bb}) reduces to (\ref{dd}), as it must be. By differentiating (\ref{ch}%
) with respect to $t$, we get%
\begin{eqnarray}
\partial _{t}\Phi _{\mathcal{Y}_{\alpha }^{\theta }(t)}(\xi ) &=&-\left[
\lambda +a\psi _{\alpha ,\theta }(\xi )\right] \Phi _{\mathcal{Y}_{\alpha
}^{\theta }(t)}(\xi )+\frac{\lambda e^{-\lambda t-\psi _{\alpha ,\theta
}(\xi )}}{1-(1-e^{-\lambda t})e^{-\psi _{\alpha ,\theta }(\xi )}}\Phi _{%
\mathcal{Y}_{\alpha }^{\theta }(t)}(\xi )  \label{tt} \\
&=&-a\psi _{\alpha ,\theta }(\xi )\Phi _{\mathcal{Y}_{\alpha }^{\theta
}(t)}(\xi )-\lambda (1-e^{-\psi _{\alpha ,\theta }(\xi )})\frac{\Phi _{%
\mathcal{Y}_{\alpha }^{\theta }(t)}(\xi )}{1-(1-e^{-\lambda t})e^{-\psi
_{\alpha ,\theta }(\xi )}},  \notag
\end{eqnarray}%
which, considering (\ref{bb}), coincides with (\ref{lem}). Again, as a
check, (\ref{lem}) reduces to (\ref{pre1}) for $\alpha \rightarrow 1^{+},$ $%
\theta =-1.$
\end{proof}

\

Let $L_{loc}^{1}(\mathbb{R})$ be the space of the locally integrable
function and let $I_{\gamma }^{\nu }$ denote the Feller integral with symbol
(\cite{GOR}, p.341)%
\begin{equation}
\widehat{I_{\gamma }^{\nu }}(\xi )=|\xi |^{-\nu }e^{-i\pi \gamma sign(\xi
)/2},\qquad |\gamma |\leq \left\{
\begin{array}{l}
\nu ,\qquad 0<\nu <1 \\
2-\nu ,\qquad 1<\nu <2%
\end{array}%
\right. ,  \label{int}
\end{equation}%
defined, for $0<\nu <1,$ for functions in $L_{loc}^{1}(\mathbb{R})$ (see
also \cite{HIL}). Since $I_{\gamma }^{\nu }$ is defined only for $\nu \neq 1$%
, for the case $\alpha =2$ we consider instead the Weyl integral with symbol
(\cite{GOR}, p.333)%
\begin{equation}
\widehat{I_{+}^{1}}(\xi )=|\xi |^{-1}e^{i\pi /2sign(\xi )},  \label{rr}
\end{equation}%
which can be written explicitly as%
\begin{equation*}
I_{+}^{1}[f(x)]=\int_{-\infty }^{x}f(z)dz.
\end{equation*}

\begin{theorem}
Let $\alpha \in (1,2)$ and $\theta =2-\alpha .$ Then the density of the
process $\mathcal{Y}_{\alpha }^{\theta }(t)$, $t\geq 0,$ satisfies the
following equation%
\begin{equation}
\partial _{t}u(x,t)=a\left[ \mathcal{D}_{x}^{\alpha ,\theta }+\lambda t(I-%
\mathcal{O}_{1,x}^{\alpha ,\theta })\right] u(x,t)+\frac{\lambda }{\alpha }%
\left( I-\mathcal{O}_{1,x}^{\alpha ,\theta }\,\right) \,I_{1-\alpha
}^{\alpha -1}\left[ xu(x,t)\right] ,  \label{eq3}
\end{equation}%
with initial condition $u(x,0)=p_{\alpha }^{\theta }(x,1),$ while, for $%
\alpha =2,$ $\theta =0,$ we have instead%
\begin{equation}
\partial _{t}u(x,t)=a\left[ \mathcal{\partial }_{x}^{2}+\lambda t(I-e^{%
\mathcal{\partial }_{x}^{2}})\right] u(x,t)+\frac{\lambda }{2}(I-e^{\mathcal{%
\partial }_{x}^{2}})\int_{-\infty }^{x}zu(z,t)dz,  \label{eq4}
\end{equation}%
with $u(x,0)=\varphi (x),$ where $\varphi $ denotes the standard Gaussian
density function.
\end{theorem}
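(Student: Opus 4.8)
The plan is to derive (\ref{eq3}) and (\ref{eq4}) from the characteristic-function equation (\ref{lem}) already established, simply by recognising the right-hand side of (\ref{lem}) as the Fourier transform of the right-hand side of (\ref{eq3}) (resp.\ (\ref{eq4})). First I would take the Fourier transform, in the $x$-variable, of the operators appearing in (\ref{eq3}). By (\ref{due}) and (\ref{cf}), $\mathcal{F}\{a\mathcal{D}_{x}^{\alpha,\theta}u;\xi\}=-a\psi_{\alpha,\theta}(\xi)\widetilde{u}(\xi,t)$ and $\mathcal{F}\{a\lambda t(I-\mathcal{O}_{1,x}^{\alpha,\theta})u;\xi\}=a\lambda t(1-e^{-\psi_{\alpha,\theta}(\xi)})\widetilde{u}(\xi,t)$, which together are exactly the first line of (\ref{lem}).

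The decisive observation is that, under the hypothesis $\theta=2-\alpha$, the symbol of the Feller integral $I_{1-\alpha}^{\alpha-1}$ coincides with the pseudo-differential multiplier in the second line of (\ref{lem}). Indeed, by (\ref{int}) with $\nu=\alpha-1\in(0,1)$ and $\gamma=1-\alpha$ (admissible since $|\gamma|=\alpha-1=\nu$), one has $\widehat{I_{1-\alpha}^{\alpha-1}}(\xi)=|\xi|^{1-\alpha}e^{-i\pi(1-\alpha)\,sign(\xi)/2}$, and $\theta-1=1-\alpha$ turns the factor $|\xi|^{1-\alpha}e^{-i\,sign(\xi)(\theta-1)\pi/2}$ of (\ref{lem}) into precisely this symbol. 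The integral is legitimately applied to $xu(\cdot,t)$: for $\alpha>1$ the density $p_{\alpha}^{\theta}(\cdot,s)$ has finite first absolute moment, as noted in (\ref{dom}), so $xu(\cdot,t)=\sum_{k\geq1}e^{-\lambda t}(1-e^{-\lambda t})^{k-1}x\,p_{\alpha}^{\theta}(\cdot,k+at)$ is an absolutely convergent series of functions in $L_{loc}^{1}(\mathbb{R})$. Hence $\mathcal{F}\{\frac{\lambda}{\alpha}(I-\mathcal{O}_{1,x}^{\alpha,\theta})I_{1-\alpha}^{\alpha-1}[xu(x,t)];\xi\}$ equals $\frac{\lambda}{\alpha}(1-e^{-\psi_{\alpha,\theta}(\xi)})\widehat{I_{1-\alpha}^{\alpha-1}}(\xi)\mathcal{F}\{xu(x,t);\xi\}$, which is the second line of (\ref{lem}). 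Adding the two contributions, the Fourier transform of the right-hand side of (\ref{eq3}) equals $\partial_{t}\widetilde{u}(\xi,t)$ by (\ref{lem}); inverting the (injective) Fourier transform --- legitimate because for each fixed $t$ the function $u(\cdot,t)$ is a genuine probability density, a convergent mixture of the smooth densities $p_{\alpha}^{\theta}(\cdot,k+at)$, on which all the operators in (\ref{eq3}) act through their symbols --- yields (\ref{eq3}). The initial condition follows from $\widetilde{u}(\xi,0)=e^{-\psi_{\alpha,\theta}(\xi)}$, which by (\ref{due}) and (\ref{ch2}) is the characteristic function of $\mathcal{S}_{\alpha}^{\theta}(1)$, so that $u(x,0)=p_{\alpha}^{\theta}(x,1)$.

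For $\alpha=2$, $\theta=0$ the argument is identical, with two substitutions. Since the Feller integral is undefined for $\nu=\alpha-1=1$, I would use instead the Weyl integral $I_{+}^{1}[f(x)]=\int_{-\infty}^{x}f(z)\,dz$, whose symbol $|\xi|^{-1}e^{i\pi\,sign(\xi)/2}$ from (\ref{rr}) is exactly the $\alpha=2$, $\theta=0$ value of the multiplier $|\xi|^{1-\alpha}e^{-i\,sign(\xi)(\theta-1)\pi/2}$ appearing in (\ref{lem}); and $\mathcal{D}_{x}^{2,0}=\partial_{x}^{2}$ while $\mathcal{O}_{1,x}^{2,0}=e^{\partial_{x}^{2}}$ has symbol $e^{-\xi^{2}}$. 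Together with $\frac{\lambda}{\alpha}=\frac{\lambda}{2}$ this turns the Fourier-space identity (\ref{lem}) into (\ref{eq4}), the initial datum being $u(x,0)=p_{2}^{0}(x,1)=\varphi(x)$.

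I do not expect a genuine analytic difficulty here: the proof is essentially bookkeeping of Fourier symbols, the crux being the identification $\theta-1=1-\alpha$ forced by $\theta=2-\alpha$, which is what makes the Feller (resp.\ Weyl) integral the correct physical-space counterpart of the multiplier in the second line of (\ref{lem}). The only steps that need an explicit line of justification are the admissibility of $I_{1-\alpha}^{\alpha-1}$ on $xu(\cdot,t)$ --- handled by the finiteness of the first moment of $p_{\alpha}^{\theta}$ when $\alpha>1$, cf.\ (\ref{dom}) --- and the passage from the identity between characteristic functions back to the identity between the densities and their images under the pseudo-differential operators.
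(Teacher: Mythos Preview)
Your proposal is correct and follows essentially the same approach as the paper's own proof: both arguments identify the multiplier $|\xi|^{1-\alpha}e^{-i\,sign(\xi)(\theta-1)\pi/2}$ in the second line of (\ref{lem}) with the Feller-integral symbol $\widehat{I_{1-\alpha}^{\alpha-1}}$ (respectively the Weyl symbol $\widehat{I_{+}^{1}}$ when $\alpha=2$), combine this with the known symbols of $\mathcal{D}_{x}^{\alpha,\theta}$ and $\mathcal{O}_{1,x}^{\alpha,\theta}$, and then invert the Fourier transform, justifying the applicability of the fractional integral to $xu(\cdot,t)$ via the finiteness of the first absolute moment of $p_{\alpha}^{\theta}$ for $\alpha>1$. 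The only cosmetic difference is that the paper also explains \emph{why} the choice $\theta=2-\alpha$ is forced---namely that the admissibility condition $|\gamma|\leq\nu$ for the Feller integral, i.e.\ $|\theta-1|\leq\alpha-1$, combined with the RF constraint $|\theta|\leq 2-\alpha$, pins down $\theta=2-\alpha$---whereas you take this hypothesis as given and directly verify the symbol match; since the statement already assumes $\theta=2-\alpha$, your verification is entirely sufficient.
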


\begin{proof}
We start by noting that, for $\alpha \in (1,2]$, the function $xu(x,t)$
belongs to $L^{1}(\mathbb{R}),$ indeed%
\begin{equation}
\int_{-\infty }^{+\infty }xu(x,t)dx=\sum_{k=1}^{\infty }e^{-\lambda
t}(1-e^{-\lambda t})^{k-1}\int_{-\infty }^{+\infty }xp_{\alpha }^{\theta
}(x,k+t)dx=0,  \label{ll}
\end{equation}%
since the location parameter of $\mathcal{S}_{\alpha }^{\theta }$ is zero by
assumption. We recognize, in the last term of (\ref{lem}), the symbol given
in (\ref{int}), with $\nu =\alpha -1\in (0,1)$ and $\gamma =\theta -1.$ Thus
we introduce the constraint $|\theta -1|\leq \alpha -1$, which must be
considered together with the condition given in (\ref{uno}), i.e. $|\theta
|\leq 2-\alpha $, so that they are jointly satisfied only by $\theta
=2-\alpha .$ Then, considering (\ref{cf}) and (\ref{int}), we can write the
inverse Fourier transform of (\ref{lem}) as in (\ref{eq3}). Equation (\ref%
{eq4}) can be derived analogously from (\ref{lem}), by considering (\ref{rr}%
). The convergence of the integral in (\ref{eq4}) follows from (\ref{ll}).
\end{proof}

\begin{remark}
It is easy to check that equation (\ref{eq3}) reduces to (\ref{pre1}) in the
limit, for $\alpha \rightarrow 1^{+}.$\
\end{remark}

\begin{remark}
The solution to equation (\ref{eq4}) coincides with the transition density
of the process $W(at+B(t)),$ $t>0$. In this case, i.e. for $\alpha =2,$ $%
\theta =0,$ the first two moments exist and we have%
\begin{eqnarray*}
\mathbb{E}\mathcal{Y}_{2}(t) &=&\mathbb{E}\left\{ \mathbb{E}\left[ \left.
W(at+B(t))\right\vert B(t)\right] \right\} =0 \\
Var\mathcal{Y}_{2}(t) &=&\sum_{k=1}^{\infty }e^{-\lambda t}(1-e^{-\lambda
t})^{k-1}\mathbb{E}\left[ W(at+k)\right] ^{2}=2\left( at+\frac{1}{\lambda }%
\right) .
\end{eqnarray*}%
In the general case, for $\alpha \in (0,2)$ we can only evaluate the
fractional moment of order $\gamma \in (-1,\alpha )$, by applying the result
in \cite{SAMO}, p.18,%
\begin{eqnarray}
\mathbb{E}\mathcal{Y}_{\alpha }^{\gamma }(t) &=&\sum_{k=1}^{\infty
}e^{-\lambda t}(1-e^{-\lambda t})^{k-1}\mathbb{E}\mathcal{S}_{\alpha
}^{\gamma }(s)  \label{mom} \\
&=&\frac{\Gamma \left( 1-\frac{\gamma }{\alpha }\right) \cos (\gamma \theta
\pi /2\alpha )(1+\tan ^{2}(\theta \pi /2)^{\gamma /2\alpha }}{\Gamma \left(
1-\gamma \right) \cos (\gamma \pi /2)}\sum_{k=1}^{\infty }e^{-\lambda
t}(1-e^{-\lambda t})^{k-1}\mathbb{(}at+k\mathbb{)}^{\gamma /\alpha }  \notag
\\
&=&\frac{\Gamma \left( 1-\frac{\gamma }{\alpha }\right) \cos (\gamma \theta
\pi /2\alpha )(1+\tan ^{2}(\theta \pi /2)^{\gamma /2\alpha }}{\Gamma \left(
1-\gamma \right) \cos (\gamma \pi /2)}\sum_{j=0}^{\infty }\binom{\gamma
/\alpha }{j}(at)^{\gamma /\alpha -j}\mathbb{E}Z^{j}  \notag
\end{eqnarray}%
where ${Z}_{j}$ is geometric r.v. $Z$ with parameter $e^{-\lambda t}.$
\end{remark}

As far as the tails behavior of the density of $\mathcal{Y}_{\alpha
}^{\theta }(t),$ for any fixed $t,$ we prove that it has regularly varying
tails, with index $\alpha $. Thus it exhibits the same tails behavior of the
stable process and of $\mathcal{X}_{\alpha }^{\theta }(t)$ (see Theorem 14),
even though the constant, in this case, is not linear in time.

\begin{theorem}
For $\alpha \in (0,2)$, we have that%
\begin{equation}
\left\{
\begin{array}{c}
\lim_{x\rightarrow \infty }x^{\alpha }P(\mathcal{Y}_{\alpha }^{\theta
}(t)>x)=\frac{C_{\alpha ,\theta }\frac{1+ate^{-\lambda t}}{e^{-\lambda t}}}{%
\Gamma (1-\alpha )} \\
\lim_{x\rightarrow \infty }x^{\alpha }P(\mathcal{Y}_{\alpha }^{\theta
}(t)<-x)=\frac{C_{\alpha ,\theta }^{\prime }\frac{1+ate^{-\lambda t}}{%
e^{-\lambda t}}}{\Gamma (1-\alpha )}%
\end{array}%
\right. ,\qquad t\geq 0,  \label{cr}
\end{equation}%
where $C_{\alpha ,\theta }=\frac{1}{2}\left[ 1-\frac{\tan (\pi \alpha /2)}{%
\tan (\pi \theta /2)}\right] $ and $C_{\alpha ,\theta }^{\prime }=\frac{1}{2}%
\left[ 1+\frac{\tan (\pi \alpha /2)}{\tan (\pi \theta /2)}\right] $.
\end{theorem}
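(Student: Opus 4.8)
The plan is to mimic the proof of Theorem 14 via a Tauberian argument, but now for the time-changed process $\mathcal{Y}_{\alpha}^{\theta}(t)=\mathcal{S}_{\alpha}^{\theta}(at+B(t))$ instead of the subordinated process $\mathcal{S}_{\alpha}(at+\Gamma(t))$. First I would reduce to the totally skewed case $\alpha\in(0,1)$, $\theta=-\alpha$, where the Laplace transform is available explicitly. Here the tail of $\mathcal{Y}_{\alpha}^{-\alpha}(t)$ can be studied through
\begin{equation*}
\int_{0}^{+\infty}e^{-\eta x}P(\mathcal{Y}_{\alpha}^{-\alpha}(t)>x)\,dx=\frac{1-\mathbb{E}e^{-\eta\mathcal{Y}_{\alpha}^{-\alpha}(t)}}{\eta},
\end{equation*}
and from the characteristic-function computation in \eqref{ch} we have $\mathbb{E}e^{-\eta\mathcal{Y}_{\alpha}^{-\alpha}(t)}=\Phi_{\mathcal{Y}_{\alpha}^{\theta}(t)}(\xi)$ evaluated with $\psi_{\alpha,-\alpha}(\xi)$ replaced by $\eta^{\alpha}$ (the Laplace exponent of the stable subordinator), i.e.
\begin{equation*}
\mathbb{E}e^{-\eta\mathcal{Y}_{\alpha}^{-\alpha}(t)}=\frac{e^{-\lambda t-at\eta^{\alpha}-\eta^{\alpha}}}{1-(1-e^{-\lambda t})e^{-\eta^{\alpha}}}.
\end{equation*}
Expanding this for $\eta\to 0$ to first order in $\eta^{\alpha}$, the denominator $1-(1-e^{-\lambda t})e^{-\eta^{\alpha}}\sim e^{-\lambda t}+(1-e^{-\lambda t})\eta^{\alpha}$ contributes the $1/e^{-\lambda t}$ factor and the numerator's $(1+at)\eta^{\alpha}$ term contributes the $1+ate^{-\lambda t}$ piece, so that $\bigl(1-\mathbb{E}e^{-\eta\mathcal{Y}_{\alpha}^{-\alpha}(t)}\bigr)/\eta\sim \frac{1+ate^{-\lambda t}}{e^{-\lambda t}}\,\eta^{\alpha-1}$ as $\eta\to 0$. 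Applying the Tauberian theorem (Theorem XIII-5-4, p.446 in \cite{FEL}) then yields the first limit in \eqref{cr} with $C_{\alpha,\theta}=1$, which is the correct normalization for $\theta=-\alpha$.

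Next I would bootstrap to general $\alpha$ and $\theta$. For $\alpha\in(0,1)$ and $|\theta|\leq\alpha$ I would invoke equation (1.2.6) in \cite{SAMO}, which relates the tails of a general stable law to those of the totally skewed one and produces the constants $C_{\alpha,\theta}$ and $C_{\alpha,\theta}'$; conditioning on $B(t)=k$ and summing, the tail constant for $\mathcal{Y}_{\alpha}^{\theta}(t)$ picks up exactly the same factor $\sum_{k\geq1}e^{-\lambda t}(1-e^{-\lambda t})^{k-1}(at+k)=\frac{1+ate^{-\lambda t}}{e^{-\lambda t}}$ coming from $\mathbb{E}[at+B(t)]$, since the stable tail constant is linear in the time parameter. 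For $\alpha\geq 1$ and $\theta=0$ I would reuse the subordination-type device from the proof of Theorem 14: writing $\mathcal{S}_{\alpha}^{\theta}(at+B(t))\overset{d}{=}\bigl(A_{\alpha/\alpha'}\bigr)^{1/\alpha'}X_{\alpha'}$ with the same auxiliary stable variables as before (conditioned on $B(t)$ in place of $\Gamma(t)$), one reduces to the case $\alpha'>1$ already handled and transfers the tail asymptotics through the standard product-of-stables computation in Proposition 1.3.1 of \cite{SAMO}, p.20.

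The main obstacle I anticipate is the $\eta\to 0$ expansion in the totally skewed case: one must be careful that the expansion of the ratio $\frac{e^{-\lambda t-(1+at)\eta^{\alpha}}}{1-(1-e^{-\lambda t})e^{-\eta^{\alpha}}}$ is carried out to precisely first order in $\eta^{\alpha}$ and that the $\eta^{\alpha-1}$ coefficient assembled is exactly $\frac{1+ate^{-\lambda t}}{e^{-\lambda t}}$ and not some algebraically rearranged but numerically different constant — this is where the non-linearity in $t$ (the presence of both the $e^{\lambda t}$ factor from the denominator and the $1+ate^{-\lambda t}$ from the numerator) enters, and it is the one genuine departure from the Lévy case of Theorem 14. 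A secondary technical point is checking that the Tauberian theorem applies, i.e. that $x\mapsto P(\mathcal{Y}_{\alpha}^{\theta}(t)>x)$ is monotone in $x$ (clear) and that the regular variation index $\alpha-1$ lies in the admissible range, which holds since $\alpha\in(0,1)$ in the base case; the remaining transfers to general $(\alpha,\theta)$ are then routine given the cited results in \cite{SAMO}.
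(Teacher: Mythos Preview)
Your proposal is correct and follows essentially the same route as the paper: the Laplace-transform/Tauberian computation in the totally skewed case $\alpha\in(0,1)$, $\theta=-\alpha$ yielding the coefficient $\frac{1+ate^{-\lambda t}}{e^{-\lambda t}}=at+e^{\lambda t}$, and then the reduction to general $(\alpha,\theta)$ via (1.2.6) and Proposition~1.3.1 of \cite{SAMO}. The paper's own proof simply says ``the rest of the proof follows the same lines of Theorem~14'' after the Tauberian step, which is exactly the bootstrap you outline.
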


\begin{proof}
For $\alpha \in (0,1),$ $\theta =-\alpha $, we can write that%
\begin{eqnarray*}
\int_{0}^{+\infty }e^{-\eta x}P(\mathcal{S}_{\alpha ,-\alpha }(at+B(t))>x)dx
&=&\frac{1-\mathbb{E}e^{-\eta \mathcal{S}_{\alpha ,-\alpha }(at+B(t))}}{\eta
} \\
&=&\frac{1-(1-e^{-\lambda t})e^{-\eta ^{\alpha }}-e^{-\lambda t-(at+1)\eta
^{\alpha }}}{\eta \left[ 1-(1-e^{-\lambda t})e^{-\eta ^{\alpha }}\right] } \\
&\sim &\frac{1-(1-e^{-\lambda t})(1-\eta ^{\alpha })-e^{-\lambda t}\left[
1-(at+1)\eta ^{\alpha }\right] }{\eta \left[ 1-(1-e^{-\lambda t})(1-\eta
^{\alpha })\right] } \\
&\sim &\frac{1+ate^{-\lambda t}}{e^{-\lambda t}}\eta ^{\alpha -1},
\end{eqnarray*}%
for any fixed $t$ and for $\eta \rightarrow 0,$ so that we get the first
equation in (\ref{cr}), for $C_{\alpha ,\theta }=1.$ The rest of the proof
follows the same lines of Theorem 14.
\end{proof}

\end{document}